\documentclass[12pt,reqno]{amsart}
\usepackage{amssymb,amsmath,amsthm,amsfonts, shadow,fancybox}

\usepackage[usenames,dvipsnames]{color}
\usepackage[colorlinks=true,citecolor=red,linkcolor=blue]{hyperref}
\usepackage[all]{hypcap}

\usepackage{graphics}

\theoremstyle{plain} \newtheorem{thm}{Theorem}
 \newtheorem{prop}[thm]{Proposition}
\newtheorem{lemma}[thm]{Lemma}

\theoremstyle{definition} 
\newtheorem{defn}[thm]{Definition}

\newtheorem{ex}[thm]{Example} 
\theoremstyle{remark} \newtheorem{remark}[thm]{Remark}

%\evensidemargin = 0 cm \topmargin = -1 cm
%\topmargin = 0 cm
%\parskip = 2.5 mm
%%%%%%%%%%%%%%%%%%%%%%%%%%%%%%%%%%%%%%%%%%%%%%%%%%%%%%%%%%%%%%%%%%%%%
\newcommand{\diag}{\Delta}
\newcommand{\R}{\mathbb R}
\DeclareMathOperator{\rec}{CR} 
\newcommand{\nw}{\Omega}
\newcommand{\Z}{\mathbb Z}

\newcommand{\N}{\mathbb N}

\newcommand{\nb}{\dot}
\DeclareMathOperator{\expn}{E} 
\DeclareMathOperator{\shad}{S} 
\DeclareMathOperator{\cl}{cl} 
\DeclareMathOperator{\Int}{int} 
\DeclareMathOperator{\per}{Per}

%%%%%%%%%%%%%%%%%%%%%%%%%%%%%%%%%%%%%%%%%%%%%%%%%%%%%%%%%%%%%%%%%%%%%%%%%
%
\begin{document}
%\title[Chain recurrent set for noncompact spaces]{Decomposition of chain recurrent set for noncompact spaces}
\title[Spectral decomposition for top.\ Anosov homeomorphisms]{Spectral decomposition for topologically Anosov homeomorphisms on noncompact and non-metrizable spaces}

\author{Tarun Das}\address{The M. S. University of Baroda, Vadodara, Gujarat, India}\email{tarukd@gmail.com}
\author{Keonhee Lee}\address{Chungnam National University, Daejeon, 305-764, Korea}\email{khlee@cnu.ac.kr}
\author{David Richeson}\address{Dickinson College\\ Carlisle, PA 17013, USA} \email{richesod@dickinson.edu} 
\author{Jim Wiseman} \address{Agnes Scott College \\ Decatur, GA 30030, USA} \email{jwiseman@agnesscott.edu}

\date{\today}
\keywords{Expansiveness, shadowing, chain recurrent set}
\subjclass[2000]{Primary 37B20; Secondary 37B25}

%%%%%%%%%%%%%%%%%%%%%%%%%%%%%%%%%%%%%%%%%%%%%%%%%%%%%%%%%%%%%%%%%%%%%%%%%%%%%%
\begin{abstract}
We introduce topological definitions of expansivity, shadowing, and chain recurrence for homeomorphisms. They generalize the usual definitions for metric spaces. We prove various theorems about topologically Anosov homeomorphisms (maps that are expansive and have the shadowing property) on noncompact and non-metrizable spaces that generalize theorems for such homeomorphisms on compact metric spaces. The main result is a generalization of Smale's spectral decomposition theorem to topologically Anosov homeomorphisms on first countable locally compact paracompact Hausdorff spaces.
\end{abstract}

%%%%%%%%%%%%%%%%%%%%%%%%%%%%%%%%%%%%%%%%%%%%%%%%%%%%%%%%%%%%%%%%%%%%%%%%%%%%%
\maketitle

\section{Introduction}

The goal of this paper is to extend the following result, Smale's spectral decomposition theorem applied to Anosov diffeomorphisms of compact manifolds, to more general topological spaces. 

\begin{thm}[\cite{Smale:1967}] \label{thm:smale}
Let $M$ be a compact  manifold and $f:M\to M$ be an Anosov diffeomorphism. Then the non-wandering set $\nw (f)$ can be written as a finite union of disjoint closed invariant sets on which $f$ is topologically transitive. 
\end{thm}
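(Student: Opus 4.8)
The plan is to decompose $\nw(f)$ using the chain recurrent set and to let hyperbolicity force the decomposition to be finite. First I would record the structural facts available for an Anosov diffeomorphism of a compact manifold: the tangent bundle splits as $E^s\oplus E^u$ with uniform contraction and expansion, every point has local stable and unstable manifolds, and $f$ is both expansive and has the shadowing (pseudo-orbit tracing) property. Using the Anosov closing lemma I would show that every nonwandering point is approximated by periodic points, so that $\nw(f)=\cl(\per(f))$ and moreover $\nw(f)=\rec(f)$, the chain recurrent set.

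Next, on $\rec(f)$ I would introduce the chain relation $x\sim y$, meaning that for every $\epsilon>0$ there is an $\epsilon$-chain from $x$ to $y$ and an $\epsilon$-chain from $y$ to $x$. This is an equivalence relation whose classes, the chain components, are automatically closed and $f$-invariant; these are the candidate pieces $\Lambda_1,\dots,\Lambda_k$ of the decomposition. The essential geometric input is the local product structure coming from hyperbolicity: for nearby points the local stable manifold of one meets the local unstable manifold of the other in a single point. I would exploit this to prove that each chain component is open in $\rec(f)$, by showing that any point $y$ close enough to $x\in\rec(f)$ can be joined to $x$ by short chains in both directions, sliding along local stable and unstable sets to construct the return. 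Since $\rec(f)$ is compact and its chain components are simultaneously open and closed in it, there can be only finitely many of them, and they are pairwise disjoint.

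Finally I would check that $f$ restricted to each $\Lambda_i$ is topologically transitive. By the definition of a chain component, $f$ is chain transitive on $\Lambda_i$, and the shadowing property upgrades chain transitivity to genuine transitivity: given nonempty relatively open sets $U,V\subseteq\Lambda_i$, choose $x\in U$ and $y\in V$, build an $\epsilon$-chain from $x$ to $y$, and shadow it by a true orbit to obtain a point of $U$ whose forward iterates enter $V$. A Baire category argument then produces a point of $\Lambda_i$ with dense orbit, giving transitivity.

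The step I expect to be the main obstacle is the finiteness, that is, proving the chain components are open in $\rec(f)$. This is precisely where hyperbolicity must enter in an essential way---expansiveness alone does not separate the pieces, and without the local product structure the chain recurrent set could in principle split into infinitely many or non-isolated components. Making the ``slide along local stable and unstable sets to close up a short chain'' construction precise, with estimates that are uniform over the compact manifold, is the technical heart of the argument.
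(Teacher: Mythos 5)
Your plan is essentially the paper's own route: it does not prove Theorem~\ref{thm:smale} directly but derives it as a special case of Theorem~\ref{thm:spectral}, via exactly the chain you describe --- $\nw(f)=\rec(f)$ by shadowing (Proposition~\ref{prop:nwchrec}), density of periodic points (Proposition~\ref{prop:denseper}), chain components closed and invariant (Proposition~\ref{prop:chrec}), open in $\rec(f)$ via the local product structure $W^s_B(x)\cap W^u_B(y)$ (Propositions~\ref{prop:unstabstabsets} and~\ref{prop:openclosed}), finiteness from compactness plus open-and-closedness, and transitivity on each component from shadowing (Proposition~\ref{prop:chcomp}). You have also correctly identified the openness of the chain components as the place where the product structure does the real work.

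There is one genuine gap, in the transitivity step. You take relatively open sets $U,V\subseteq\Lambda_i$, build an $\epsilon$-chain from $x\in U$ to $y\in V$, shadow it, and claim to ``obtain a point of $U$ whose forward iterates enter $V$.'' But the tracing point is only guaranteed to be close to $x$ in the ambient manifold $M$; it need not lie in $\Lambda_i$ at all, so it need not be in $U$. (Your closing Baire-category step has the same problem: you need a dense orbit \emph{inside} $\Lambda_i$.) The paper repairs this by closing the chain up into a \emph{periodic} pseudo-orbit from $x$ through $y$ back to $x$ and invoking the periodic version of unique shadowing (Lemma~\ref{lem:uniqueshadowing}): the tracing point $p$ is then periodic, hence chain recurrent, hence in $\rec(f)$; being close to $x$ and with $R=\Lambda_i$ relatively open in $\rec(f)$, it lies in $\Lambda_i$, so $p\in U$ and $f^n(p)\in V$. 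You already have all the needed ingredients (the closing lemma and density of periodic points), so this is a fixable omission, but as written the step fails.
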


Recall that a diffeomorphism is \emph{Anosov} if it has a hyperbolic structure on the entire manifold and that every Anosov diffeomorphism is expansive and has the shadowing property (see, e.g., \cite[Sect.~9.2]{Robinson:1995}). Theorem \ref{thm:smale} has been extended to homeomorphisms on compact metric spaces:

\begin{thm}[{\cite[{Theorem~11.13}]{A}, \cite[{Theorem~3.4.4}]{Aoki:1994}}]\label{thm:cpctspectral}
Let $X$ be a compact metric space and $f:X\to X$ be an expansive homeomorphism with the shadowing property. Then $\nw (f)$ can be written as a finite union of disjoint closed invariant sets on which $f$ is topologically transitive.
\end{thm}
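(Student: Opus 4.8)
The plan is to follow the classical four‑step strategy, using compactness of $X$ together with the two hypotheses (expansivity with some constant $c>0$, and the shadowing property) exactly where each is essential. The first step is to collapse the various recurrence notions by proving $\nw(f)=\rec(f)=\cl(\per(f))$. The inclusions $\cl(\per(f))\subseteq\nw(f)\subseteq\rec(f)$ hold for any homeomorphism, so the content is $\rec(f)\subseteq\cl(\per(f))$. Given $x\in\rec(f)$ and a target $\epsilon<c/2$, I would take the shadowing constant $\delta$, run a closed $\delta$-chain from $x$ to $x$, repeat it periodically to obtain a periodic $\delta$-pseudo-orbit through $x$, and $\epsilon$-shadow it by a genuine orbit $O(z)$; comparing the shadow with its period‑translate and invoking expansivity forces $z$ to be periodic and within $\epsilon$ of $x$, so $x\in\cl(\per(f))$.

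Next I would decompose $\rec(f)$. Declaring $x\sim y$ when for every $\eta>0$ there are $\eta$-chains from $x$ to $y$ and from $y$ to $x$ gives an equivalence relation on $\rec(f)$ whose classes $\Omega_1,\Omega_2,\dots$ are closed, $f$-invariant, and chain transitive; these are the candidate pieces of the decomposition.

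The main obstacle is finiteness, which is exactly where expansivity is indispensable. I would show that each class is open in $\rec(f)$; since $\rec(f)$ is compact and the classes are disjoint and cover it, openness forces their number to be finite. The delicate point is that, for a general homeomorphism, two nearby chain-recurrent points need not be chain equivalent, because once the mesh $\eta$ drops below their separation the required $\eta$-chains disappear. Expansivity together with shadowing repairs this by producing canonical coordinates (a local product structure): for $x,y$ sufficiently close the local stable and unstable sets meet in a single bracket point $[x,y]\in W^s_\epsilon(x)\cap W^u_\epsilon(y)$, whose forward orbit is asymptotic to that of $x$ and whose backward orbit is asymptotic to that of $y$. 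Using the first step to take $x$ and $y$ periodic, I would splice the orbit of $[x,y]$ together with the periodic orbits of $x$ and $y$ to manufacture $\eta$-chains of every mesh in both directions, so that $x\sim y$ and the class is open. Establishing the local product structure from expansivity and shadowing, and checking that the splicing yields chains of arbitrarily small mesh, is the technical heart of the proof.

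Finally, I would upgrade chain transitivity to topological transitivity on each $\Omega_i$ using shadowing. Given nonempty relatively open $U,V\subseteq\Omega_i$, pick $x\in U$ and $y\in V$, concatenate a fine chain from $x$ to $y$ with one from $y$ to $x$ into a periodic pseudo-orbit, and shadow it; by expansivity the shadow is a periodic orbit, and since it shadows chains through $x$ it is chain equivalent to $x$ and hence lies in $\Omega_i$. For the shadowing parameter taken small relative to $U$ and $V$, this orbit meets $U$ and then later meets $V$, giving $f^n(U)\cap V\neq\emptyset$. Thus $f|_{\Omega_i}$ is topologically transitive, and $\nw(f)=\bigcup_i\Omega_i$ is the desired finite disjoint union of closed invariant transitive pieces.
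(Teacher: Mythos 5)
Your proposal is correct and follows essentially the same route the paper takes for its generalization (Theorem~\ref{thm:spectral}): identify $\nw(f)$ with the chain recurrent set via shadowing, decompose into chain components, use the local product structure plus density of periodic points to show each component is open (hence finitely many by compactness), and upgrade chain transitivity to topological transitivity by shadowing periodic pseudo-orbits. The only difference is cosmetic: the paper works with neighborhoods of the diagonal instead of $\epsilon$'s and $\delta$'s, since it cites the compact metric case and proves the non-metrizable generalization.
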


Yang  extended Theorem \ref{thm:cpctspectral} to noncompact metric spaces (\cite[Theorem~4]{Yang}), with the additional (strong) requirement that the chain recurrent set be compact.  

It is well known that some dynamical properties of homeomorphisms on compact spaces (i.e., properties that are conjugacy invariant), may not be dynamical properties on noncompact spaces. For example, a dynamical system on a noncompact metric space may be expansive or have the shadowing property with respect to one metric, but not with respect to another metric that induces the same topology (see  Examples \ref{ex:expdepmet} and \ref{ex:shdepmet}). Of course, it is preferable to have a theory that is independent of any change of (compatible) metric. In this article we give topological definitions of expansiveness and shadowing that are equivalent to the usual metric definitions for homeomorphisms on compact metric spaces and are  dynamical properties for any metric space. Moreover, these definitions apply to non-metrizable spaces as well. For related work see \cite{Bryant:1960,Das:1998,Hurley:1991,Hurley:1992,Lee:1996,Ombach:1987,Richeson:2007}.

Then we extend the spectral decomposition theorem  to dynamical systems on spaces that are not necessarily metrizable and not necessarily compact. The only concession is that in the noncompact case, the collection of basic sets in the decomposition need not be finite.

\begin{thm}\label{thm:spectral}
Let $X$ be a first countable locally compact paracompact Hausdorff  space and $f:X\to X$ be an expansive homeomorphism with the shadowing property. Then $\nw (f)$ can be written as a union of disjoint closed invariant sets on which $f$ is topologically transitive. If $X$ is compact, then this decomposition is finite.
\end{thm}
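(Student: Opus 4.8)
The plan is to transpose the classical spectral-decomposition argument into the topological (neighborhood-of-the-diagonal) language developed in the earlier sections, using the hypotheses on $X$ to recover the uniform-continuity and compactness steps that the metric proof takes for granted. The first step is to identify the non-wandering set with the chain recurrent set, $\nw(f)=\rec(f)$. The inclusion $\nw(f)\subseteq\rec(f)$ is general, while the shadowing property supplies the reverse: a periodic $U$-chain through a chain recurrent point can be shadowed by a genuine orbit, which together with expansiveness forces that point into $\overline{\per(f)}\subseteq\nw(f)$. I expect this equality, or the tools for it, to be among the structural results already proved for topologically Anosov homeomorphisms, so from here on I work inside the closed invariant set $\rec(f)$.

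Next I would produce the decomposition. Define $x\sim y$ on $\rec(f)$ to mean that for every neighborhood $U$ of $\diag$ there is a $U$-chain from $x$ to $y$ and a $U$-chain from $y$ to $x$. Reflexivity is precisely chain recurrence, while symmetry and transitivity are immediate, so the equivalence classes $\{C_\alpha\}$ form a disjoint partition of $\rec(f)=\nw(f)$. Invariance $f(C_\alpha)=C_\alpha$ follows by translating chains one step along the orbit. For closedness, fix $p\in C_\alpha$ and write $C_\alpha=\{x:x\sim p\}$; if $x_n\to x$ with $x_n\sim p$, then first countability lets me argue with sequences, and the continuity of $f$ — made uniform near the relevant compact neighborhoods by local compactness — lets me adjust the endpoints of the chains witnessing $x_n\sim p$ from $x_n$ to the limit $x$, at the cost of slightly coarsening the chain constant, so that $x\sim p$. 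Hence each $C_\alpha$ is closed and invariant.

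The heart of the matter is to show that $f$ is topologically transitive on each $C_\alpha$. By construction $f|_{C_\alpha}$ is chain transitive, and shadowing should upgrade this to genuine transitivity. Given nonempty relatively open $V,W\subseteq C_\alpha$, choose $x\in V$, $y\in W$, pick a shadowing neighborhood $N$ of $\diag$ refining neighborhoods witnessing $V$ and $W$, build an $N$-chain from $x$ to $y$, and shadow it by the orbit of some $z$; for $N$ fine enough one gets $z\in V$ and $f^k z\in W$ for some $k>0$, whence $f^k(V)\cap W\neq\emptyset$. The delicate point is that the shadowing point $z$ a priori lies in $X$, not in $C_\alpha$, so $z\in V=V'\cap C_\alpha$ is not automatic; I would recover $z\in C_\alpha$ by showing that $z$, shadowing a chain that joins points of $C_\alpha$, is itself chain equivalent to them. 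Making that last inference rigorous — converting shadowing closeness back into chains and confining the orbit to the basic set, all without a metric and without global compactness — is the step I expect to be the main obstacle, and it is exactly where expansiveness does its essential work, since its global neighborhood $N_0$ of $\diag$ with $\bigcap_{n\in\Z}(f\times f)^{-n}(N_0)=\diag$ pins down the shadowing orbit and forces it to respect the chain structure.

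Finally, for the finiteness assertion when $X$ is compact, I would show that each $C_\alpha$ is open in $\rec(f)$. On a compact space there is a single neighborhood $W$ of $\diag$ such that any two points of $\rec(f)$ that are $W$-close are chain equivalent — one splices their individual recurrence chains through the short hop between them, using the uniform continuity that compactness provides — so the classes form a clopen partition of the compact set $\rec(f)$ and there can be only finitely many. In the noncompact case this uniform hop fails: points can march off toward infinity with no global continuity modulus, distinct components can accumulate, and the partition need not be finite, which is precisely the concession recorded in the statement. One can still run the compact argument on each compact subset to obtain local finiteness, but in the noncompact case I would simply omit the finiteness conclusion.
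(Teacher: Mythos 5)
Your overall architecture matches the paper's: identify $\nw(f)=\rec(f)$ via shadowing, partition $\rec(f)$ into chain components, show these are closed and invariant, prove transitivity on each, and get finiteness from a clopen partition of a compact set. But the two steps you yourself identify as delicate are exactly where your sketch has genuine gaps, and in both cases the missing ingredient is the same: the local product structure and the density of periodic points in $\rec(f)$, which the paper develops precisely for this purpose (Proposition~\ref{prop:unstabstabsets}, Lemma~\ref{lem:locglob}, Proposition~\ref{prop:denseper}). For transitivity on a component $R$, you correctly observe that the shadowing point $z$ a priori lives in $X$ rather than in $R$, but your proposed fix (``expansiveness pins down the orbit and forces it to respect the chain structure'') is not an argument. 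The paper's resolution is concrete: extend the chain from $x$ through $y$ back to $x$ to a \emph{periodic} pseudo-orbit and invoke the periodic refinement of unique shadowing (Lemma~\ref{lem:uniqueshadowing}), so the tracing point $p$ is periodic, hence chain recurrent; then choosing $E$ with $E[x]\cap\rec(f)\subset R$ forces $p\in R$. Without passing through periodicity (or some equivalent device) you have no way to place $z$ in $\rec(f)$ at all.

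The finiteness step is worse off. You assert that on a compact space there is a \emph{single} neighborhood $W$ of $\diag_X$ such that $W$-close chain recurrent points are chain equivalent, justified only by uniform continuity and splicing of recurrence chains. That justification cannot work: splicing the recurrence chain of $x$ with a hop to a nearby $y$ produces an $A$-chain from $x$ to $y$ only for $A$ coarser than $W$, whereas chain equivalence requires $A$-chains for \emph{every} neighborhood $A$ of the diagonal, arbitrarily fine. Indeed the claim is false for general homeomorphisms of compact spaces (a homeomorphism of $[0,1]$ whose fixed-point set is $\{0\}\cup\{1/n\}$ has infinitely many chain components accumulating at $0$); it holds here only because $f$ is topologically Anosov. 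The paper's Proposition~\ref{prop:openclosed} gets openness of $R$ in $\rec(f)$ by taking the fixed neighborhood $D$ from the local product structure, approximating by periodic points $p$ and $q$, producing heteroclinic points $z_1\in W^u(p)\cap W^s(q)$ and $z_2\in W^s(p)\cap W^u(q)$, and using their genuine orbits to manufacture $U$-chains for arbitrarily fine $U$. That is the step your sketch omits, and it is the heart of why the decomposition is finite in the compact case.
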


\begin{remark}
Every metric space is first countable, paracompact, and Hausdorff (\cite[\S 5]{SteenSeebach}), so the preceding theorem applies to any locally compact metric space.
\end{remark}

The paper is organized as follows. We discuss definitions and preliminaries in the next section.  In Section~\ref{sect:product}, we show that topologically Anosov homeomorphisms have a local product structure, and obtain topological stability as a consequence.  In Section~\ref{sect:theorem} we investigate the properties of the non-wandering and chain recurrent sets, and prove our main result, Theorem~\ref{thm:spectral}.  Finally, we discuss the relationships between the new topological definitions of expansiveness and shadowing and the existing metric definitions.

\section{Preliminaries}

The following are the standard definitions for expansiveness and shadowing, which we will refer to as metric expansiveness and metric shadowing.

\begin{defn}
Let  $(X,d)$ be a metric space and  $f:X \to X$ be a homeomorphism.
\begin{enumerate}
\item $f$ is \emph{metric expansive} if there is an $e>0$ such that for any distinct $x,y\in X$, there exists $n\in\Z$ such that $d(f^n(x), f^n(y))>e$.  The number $e$ is called an \emph{expansive constant}.

\item For $\delta>0$, a \emph{$\delta$-chain} is a sequence $\{x_0,x_1,\dots,x_n\}$ ($n\ge1$) such that $d(f(x_{i-1}), x_i) < \delta$ for $i=1,\ldots,n$.  A \emph{$\delta$-pseudo-orbit} is a bi-infinite $\delta$-chain.

\item $f$ has the \emph{metric shadowing property} if for every $\varepsilon > 0$, there exists a $\delta>0$ such that every $\delta$-pseudo-orbit $\{ x_i \}$ is $\varepsilon$-traced by a point $y$; that is, $d(f^i(y), x_i)<\varepsilon$ for all $i$.

\item A point $x$ is \emph{metric chain-recurrent} if for any $\delta>0$, there is a $\delta$-chain from $x$ to itself.

\end{enumerate}

\end{defn}
These definitions clearly require that $X$ be a metric space.  Furthermore, as we see in the following two examples, if $X$ is noncompact, these properties depend on the choice of metric; a homeomorphism that is metric expansive for $d$ may not be for $d'$, even if $d$ and $d'$ induce the same topology.  Thus on noncompact spaces, these properties are not invariant under topological conjugacy.  In addition, if $f$ has these properties, its iterates may not (\cite[Example~1]{BryantColeman}, \cite[Example~9]{Richeson:2007}).

\begin{ex}\label{ex:expdepmet}
Let $T:\R^{2}\to \R^{2}$ be the linear automorphism induced by the matrix
\begin{equation*}
 \left(
 \begin{array}{rr}
   2  & 0 \\
   0  & \frac{1}{2} \\
 \end{array}
 \right),
\end{equation*}
and consider the stereographic projection $P:S^{2}- \{(0,0,1)\}\to \R^{2}$ defined by $f(x,y,z)= \frac{(x,y)}{(1-z)}$. Then $T$ is not expansive if $\R^{2}$ has the metric induced by $P$, but $T$ is expansive when $\R^{2}$ has the usual metric. Moreover, both metrics induce the same topology on $\R^{2}$.
\end{ex}

\begin{ex}\label{ex:shdepmet}
Let $X\subset \R^2$ be the subset $\bigcup_{n\in\Z} X_n=\{n\} \times [0,2^{-|n|}]$, with the metric inherited from $\R^2$, and define $f:X\to X$ by
$$f(n,y)= \begin{cases} (n+1, 2y) \text{, if $n<0,$} \\ (n+1, \frac12 y) \text{, if $n\ge0$.}
\end{cases}$$
Choose $\delta>0$.  Since $\operatorname{diam}(X_n) < \delta$ for $n$ with 
$|n|>n_0=\lceil\frac1\delta\rceil$, the uniform continuity of $f$ and $f^{-1}$ on the compact set $X_{-n_0}\cup\dots\cup X_{n_0}$ shows that $f$ has the metric shadowing property.  However, $f$ is topologically conjugate to the map on the space $\bigcup_{n\in\Z}\left(\{n\}\times[0,1]\right)$ given by translation on the first coordinate and identity on the second, which clearly does not have the metric shadowing property.
\end{ex}

To address these issues, we make purely topological definitions for these notions.  Our definitions are equivalent to the metric ones in the case that $X$ is a compact metric space; we discuss the relationships among the definitions in general in Section~\ref{sec:relation}.

Recently, the third and fourth authors gave two topological generalizations of positive expansivity (\cite{Richeson:2007}). For a given dynamical system $f:X\to X$, they use the product map $F\equiv f\times f$ on $X\times X$ and neighborhoods of the diagonal $\diag_{X} = \{(x,x): x\in X\}$ to extend the notion of positive expansivity. The idea is that $x$ is close to $y$ in $X$ if and only if the point $(x,y)$ is close to the diagonal $\diag_X$ in $X\times X$.  Thus, instead of requiring that $d(x,y)$ be less than $\varepsilon$, we can require that $(x,y)$ be in a given neighborhood of $\diag_X$. This approach is useful for extending dynamical properties from the compact setting to the noncompact setting. 

We make the standing assumption that all topological spaces are first countable, locally compact, paracompact, and Hausdorff and that all maps are homeomorphisms.  To avoid confusion, we  denote subsets of the product space $X\times X$ by $A$, $M$, $U$, etc., and subsets of the base space $X$ by $\nb A$, $\nb M$, $\nb U$, etc.

Let $U$ be a neighborhood of $\diag_X$, and let $U[x]= \{y\in X:(x,y)\in U\}$ be the cross section of $U$ at $x\in X$. 
For any point $x \in X$ and any neighborhood $\nb G$ of $x$, we can find a neighborhood $U$ of  $\diag_{X}$ such that $U[x]\subset \nb G$. A set $M\subset X\times X$ is \emph{proper} if for any compact subset $S$, the set $M[S]=\bigcup_{x\in S}M[x]$ is compact.  $M$ is \emph{symmetric} if $M$ is equal to its transpose, $M^T=\{(y,x): (x,y)\in M\}$.   Note that if $M$ is a neighborhood of $\diag_X$, then $M\cap M^T$ is a symmetric neighborhood of $\diag_{X}$; thus we can often work with symmetric neighborhoods without loss of generality.

Let 
\begin{align*}
U^{n}= &\{(x,y):\text{there exists } z_{0}=x,z_{1},\ldots,z_{n}=y \in X\\&\text{ such that } (z_{i-1},z_{i}) \in U\text{ for } i=1,\ldots,n\}.
\end{align*} 
The intuition behind this definition is that if we wanted $U$ to be the topological equivalent of having an $(\varepsilon/n)$-ball at each point, then $U^n$ would take the place of an $\varepsilon$-ball at each point. Observe that if $U$ is symmetric, then the order of the elements within each ordered pair does not matter.  Also, for any neighborhood $U$ of $\diag_{X}$ we can find a neighborhood $V$ of $\diag_{X}$ such that $F(V^{2})\subset U$.

\begin{defn}
A homeomorphism $f:X\to X$ is \emph{(topologically) expansive} if there is a closed neighborhood $N$ of $\diag_X$ such that for any distinct $x,y \in X$ there exists $n\in \Z$ such that $F^n(x,y)\not\in N$. Such a neighborhood $N$ is called an \emph{expansive neighborhood} for $f$. Let $\expn(X)$ denote the set of expansive homeomorphisms.
\end{defn}

\begin{lemma}
If $f$ is expansive, then it has a proper expansive neighborhood.
\end{lemma}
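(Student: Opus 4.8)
The plan is to begin with an arbitrary expansive neighborhood $N$ and shrink it to a proper one, relying on the elementary observation that \emph{any} closed neighborhood of $\diag_X$ contained in $N$ is again expansive. Indeed, if $N'\subseteq N$ is a closed neighborhood of $\diag_X$ and $x\neq y$, then the index $n\in\Z$ with $F^n(x,y)\notin N$ supplied by expansivity also satisfies $F^n(x,y)\notin N'$. So it suffices to produce a proper closed neighborhood $W$ of $\diag_X$ and take $N'=N\cap W$.

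To build $W$ I would use local compactness and paracompactness. First cover $X$ by relatively compact open sets (local compactness) and pass to a locally finite open refinement $\{\nb V_\beta\}$ (paracompactness); each $\nb V_\beta$ is then relatively compact, so $\cl(\nb V_\beta)$ is compact. Set
\[ W=\bigcup_\beta \cl(\nb V_\beta)\times\cl(\nb V_\beta). \]
Since $\{\nb V_\beta\times\nb V_\beta\}$ is locally finite in $X\times X$, so is the family of closures, and a locally finite union of closed sets is closed; hence $W$ is closed. Moreover $W$ contains the open set $\bigcup_\beta \nb V_\beta\times\nb V_\beta$, which is a neighborhood of $\diag_X$ because the $\nb V_\beta$ cover $X$. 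To verify properness, fix compact $S$ and note that $W[S]=\bigcup\{\cl(\nb V_\beta):\cl(\nb V_\beta)\cap S\neq\emptyset\}$; as $\{\cl(\nb V_\beta)\}$ is locally finite and $S$ is compact, only finitely many $\cl(\nb V_\beta)$ meet $S$, so $W[S]$ is a finite union of compact sets and is compact.

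Finally I would set $N'=N\cap W$, a closed neighborhood of $\diag_X$ contained in $N$, hence expansive by the first paragraph. To see that $N'$ is proper, fix compact $S$ and write $N'[S]$ as the image of the closed set $N'\cap(S\times X)$ under the projection $X\times X\to X$ onto the second coordinate. Because $S$ is compact, this projection is a closed map (the tube lemma), so $N'[S]$ is closed. Since $N'[S]\subseteq W[S]$ and $W[S]$ is compact, $N'[S]$ is a closed subset of a compact set and is therefore compact. Thus $N'$ is a proper expansive neighborhood.

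The step I expect to be the main obstacle is securing genuine \emph{compactness} of $N'[S]$ rather than mere relative compactness: intersecting the proper neighborhood $W$ with $N$ only guarantees $N'[S]\subseteq W[S]$, and a subset of a compact set need not be compact. This is precisely why I would keep every neighborhood closed throughout and invoke the closed-projection (tube) lemma to conclude that $N'[S]$ is closed; with open neighborhoods the cross-section unions would be open and only relatively compact, and the argument would break down.
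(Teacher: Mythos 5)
Your proof is correct and follows essentially the same route as the paper: both intersect the given expansive neighborhood $N$ with a locally finite union $\bigcup_\alpha \nb V_\alpha\times\nb V_\alpha$ of products of compact sets obtained from local compactness plus paracompactness (the paper takes a closed locally finite refinement directly via Kelley, while you take an open one and pass to closures — an immaterial difference). Your closing paragraph supplies a verification of properness (closedness of $N'[S]$ via the closed-projection lemma) that the paper leaves implicit, but the underlying construction is the same.
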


\begin{proof}
Let $N$ be an expansive neighborhood for $f$.  Since $X$ is locally compact, each $x\in X$ has an open, relatively compact neighborhood $\nb U_x$.  Since $X$ is paracompact, the open cover $\{\nb U_x\}$ has a closed (and hence compact) locally finite refinement $\{\nb V_\alpha\}$ (\cite[Chapter~5, Theorem~28]{Kelley}). Then $A=N\cap\left(\bigcup_{\alpha}\left(\nb V_{\alpha}\times \nb V_{\alpha}\right)\right)$ is a proper expansive neighborhood.
\end{proof}

Let $D$ and $E$ be neighborhoods of $\diag_{X}$. A \emph{$D$-chain} is a sequence $\{x_0,x_1,\dots,x_n\}$ ($n\ge1$) such that $(f(x_{i-1}), x_i)\in D$ for $i=1,\ldots,n$.  A \emph{$D$-pseudo-orbit} is a bi-infinite $D$-chain. A $D$-pseudo-orbit $\{x_{i}\}$ is $E$-\emph{traced} by a point $y\in X$ if $(f^{i}(y), x_{i})\in E$ for all $i\in \Z$.

\begin{defn} A homeomorphism $f:X\to X$ has the \textit{(topological) shadowing property} if for every neighborhood $E$ of  $\diag_{X}$, we can find a neighborhood $D$ of $\diag_{X}$ such that every $D$-pseudo-orbit is $E$-traced by some point $y\in X$. Let $\shad(X)$ denote the set of homeomorphisms of $X$  with the shadowing property.
\end{defn}

\begin{remark}\label{rem:cpt}
If $(X,d)$ is a compact metric space, then for any neighborhood $U$ of $\diag_{X}$, we  can find $\delta >0$ such that $U_{\delta} = d^{-1}[0,\delta) \subset U$. On the other hand, every $U_{\delta}$  is a neighborhood of $\diag_{X}$. Thus, the above definitions coincide with the usual notions of expansivity and shadowing on compact metric spaces. However, the following example shows that this argument does not hold if $X$ is not compact.
\end{remark}

\begin{ex} 
If $X$ is a noncompact metric space, then a neighborhood $U$ of $\diag_{X}$ may not contain an open set $U_{\delta}$ (defined as above) for any $\delta > 0$. 
For instance, consider the neighborhood $U=\{(x,y)\in \R^{2}: |x-y| < e^{-x^2}\}$ of $\diag_\R$. There is no $\delta > 0$ with $U_{\delta}\subset U$.
\end{ex}

The following properties are easy consequences of the definitions.  As noted earlier, they do not hold for the metric definitions in the noncompact case.

\begin{prop}\label{prop:nice}
\begin{enumerate}
	\item  $f\in \expn(X)$ [resp. $\shad(X)$] if and only if $f^{k} \in \expn(X)$ [resp. $\shad(X)$] for all nonzero $k\in \Z$.
	\item If $h:X\to Y$ is a homeomorphism, then $f\in \expn(X)$ [resp. $\shad(X)$] if and only if $(h\circ f \circ h^{-1})\in \expn(Y)$ [resp. $\shad(Y)$].
\end{enumerate}
\end{prop}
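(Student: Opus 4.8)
The two biconditionals each have a trivial direction, so the plan is to reduce everything to forward implications. For part (1), if $f^k \in \expn(X)$ (resp.\ $\shad(X)$) for \emph{all} nonzero $k$, then taking $k=1$ already gives $f \in \expn(X)$ (resp.\ $\shad(X)$); hence it suffices to prove that $f \in \expn(X)$ implies $f^k \in \expn(X)$ for each nonzero $k$, and similarly for $\shad(X)$. Because the expansiveness condition quantifies over all $n \in \Z$, a single expansive neighborhood serves both $f$ and $f^{-1}$, and a time-reversal argument (described below) shows $\shad(X)$ is likewise preserved under $f \mapsto f^{-1}$. Writing $f^{-k}=(f^{-1})^{k}$, I may therefore assume $k>0$ throughout.

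For expansiveness with $k>0$, let $N$ be a closed expansive neighborhood for $f$ and set $N' = \bigcap_{j=0}^{k-1} F^{-j}(N)$, where $F = f\times f$. Since $F$ is a homeomorphism fixing $\diag_X$, each $F^{-j}(N)$ is a closed neighborhood of $\diag_X$, and a finite intersection of these is again a closed neighborhood of $\diag_X$. The key step is to check that $N'$ is expansive for $f^k$: if $F^{km}(x,y)\in N'$ for every $m\in\Z$, then for an arbitrary $n=km+l$ with $0\le l<k$ we have $F^{n}(x,y)=F^{l}\bigl(F^{km}(x,y)\bigr)\in F^{l}(N')\subseteq N$, so $F^n(x,y)\in N$ for all $n$ and the expansiveness of $f$ forces $x=y$. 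Thus no pair of distinct points stays in $N'$ under all iterates of $F^{k}=(f^k)\times(f^k)$, which is exactly expansiveness of $f^k$.

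For shadowing with $k>0$, given a neighborhood $E$ of $\diag_X$ I would use the shadowing of $f$ to obtain a neighborhood $D$ that $E$-traces every $D$-pseudo-orbit of $f$, and claim the same $D$ works for $f^k$. Given a $D$-pseudo-orbit $\{x_i\}$ of $f^k$, interpolate a sequence for $f$ by $w_{ki+l}=f^{l}(x_i)$ for $0\le l<k$: the within-block pairs lie on $\diag_X\subseteq D$ and each block-to-block pair equals $(f^{k}(x_{i-1}),x_i)\in D$, so $\{w_j\}$ is a $D$-pseudo-orbit of $f$ and is $E$-traced by some $y$. Reading off the indices $j=ki$ gives $\bigl((f^k)^{i}(y),x_i\bigr)=(f^{ki}(y),x_i)\in E$, so $y$ $E$-traces $\{x_i\}$. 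For the inverse case invoked above, given a symmetric $E$ and a $D$-pseudo-orbit $\{x_i\}$ of $f^{-1}$, I apply $F$ and reverse time via $w_j=x_{-j}$ to turn it into an $F(D)$-pseudo-orbit of $f$; choosing $D$ symmetric (via $M\cap M^T$) with $F(D)$ contained in the $f$-gauge for precision $E$, the $f$-tracing point transports back to an $E$-tracing point for $\{x_i\}$. I expect this symmetric-neighborhood bookkeeping for the negative powers to be the fussiest point, although it is entirely routine.

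Part (2) is a direct transport under $H = h\times h$, a homeomorphism $X\times X\to Y\times Y$ with $H(\diag_X)=\diag_Y$ satisfying $(g\times g)=HFH^{-1}$ for $g=h f h^{-1}$. By the symmetry $h\leftrightarrow h^{-1}$ only the forward implications need proof. For expansiveness, $H(N)$ is a closed expansive neighborhood for $g$ whenever $N$ is one for $f$, since $H$ conjugates $F^n$ to $(g\times g)^n$ and is a bijection. For shadowing, given $E_Y$ I pull back to $E_X=H^{-1}(E_Y)$, take the corresponding $f$-gauge $D_X$, and push forward to $D_Y=H(D_X)$: a $D_Y$-pseudo-orbit of $g$ pulls back under $h^{-1}$ to a $D_X$-pseudo-orbit of $f$, whose $E_X$-tracing point pushes forward under $h$ to an $E_Y$-tracing point. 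The only genuine content of the proposition is thus the two forward constructions of part (1)—the intersection $N'$ for expansiveness and the interpolation for shadowing—together with the time-reversal needed to pass from positive to negative powers.
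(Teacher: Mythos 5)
Your proof is correct and complete. The paper itself gives no argument for this proposition---it simply asserts that these properties ``are easy consequences of the definitions''---and your constructions (the intersection $\bigcap_{j=0}^{k-1}F^{-j}(N)$ for expansiveness of powers, the interpolated pseudo-orbit $w_{ki+l}=f^{l}(x_i)$ for shadowing of powers, time-reversal with a symmetric gauge for negative powers, and conjugation by $h\times h$ for part (2)) are exactly the standard arguments the authors presumably have in mind.
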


\begin{defn}
A homeomorphism $f:X\to X$ is \emph{topologically Anosov} if $f\in \expn(X)\cap \shad(X)$.
\end{defn}

\begin{lemma}\label{lem:uniqueshadowing}
Let $f$ be topologically Anosov. If $E$ is a neighborhood of $\diag_{X}$ such that $E^{2}$ is an expansive neighborhood for $f$, then there is a neighborhood $D$ of $\diag_{X}$ such that every $D$-pseudo-orbit is $E$-traced by exactly one orbit of $X$. If the $D$-pseudo-orbit is periodic, then so is the orbit that $E$-traces it.
\end{lemma}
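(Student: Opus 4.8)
The plan is to read $D$ off from the shadowing property, so that existence of a tracing point is essentially free, and then to extract both uniqueness and the periodicity statement from expansivity of $E^{2}$. Following the convention established before the lemma, I take the neighborhood $E$ to be symmetric ($E=E^{T}$); this symmetry is precisely what is needed for the tracing relation to compose into $E^{2}$, and it is the only place the symmetric form of $E$ enters.

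For existence, I would apply the defining property of $\shad(X)$ to the neighborhood $E$ itself: since $f$ has the shadowing property, there is a neighborhood $D$ of $\diag_{X}$ such that every $D$-pseudo-orbit $\{x_{i}\}$ is $E$-traced by some $y\in X$, i.e.\ $(f^{i}(y),x_{i})\in E$ for all $i\in\Z$. This step uses only $f\in\shad(X)$ and makes no reference to $E^{2}$; the same $D$ will serve for all three assertions.

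The heart of the proof, and the step I expect to be the only genuine obstacle, is uniqueness, where I must convert ``two orbits trace the same pseudo-orbit'' into a statement controlled by the expansive neighborhood $E^{2}$. Suppose $y$ and $y'$ both $E$-trace $\{x_{i}\}$. For each $i$ we then have $(f^{i}(y),x_{i})\in E$ and $(f^{i}(y'),x_{i})\in E$; symmetry of $E$ turns the second pair into $(x_{i},f^{i}(y'))\in E$, and concatenating the two pairs through the midpoint $x_{i}$ yields $F^{i}(y,y')=(f^{i}(y),f^{i}(y'))\in E^{2}$ for every $i\in\Z$. Since $E^{2}$ is an expansive neighborhood, distinct points $y\neq y'$ would force $F^{i}(y,y')\notin E^{2}$ for some $i$, a contradiction; hence $y=y'$. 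This is the topological replacement for the metric estimate $d(f^{i}y,f^{i}y')<2\varepsilon$, and the delicate point is arranging the composition to land exactly in $E^{2}$ rather than in some larger, possibly non-expansive, set — which is exactly why the symmetric form of $E$ matters. Note that what I obtain is uniqueness of the tracing \emph{point}, which a fortiori gives uniqueness of the tracing orbit.

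Finally, I would deduce periodicity from this point-level uniqueness. Let $\{x_{i}\}$ be a periodic $D$-pseudo-orbit, say $x_{i+p}=x_{i}$ for all $i$ with $p\ge 1$, and let $y$ be the unique point $E$-tracing it. Tracing at the index $i+p$ gives $(f^{i+p}(y),x_{i+p})\in E$, and since $x_{i+p}=x_{i}$ this reads $(f^{i}(f^{p}(y)),x_{i})\in E$; thus $f^{p}(y)$ also $E$-traces $\{x_{i}\}$. By the uniqueness just established, $f^{p}(y)=y$, so $y$ is periodic and the orbit $E$-tracing $\{x_{i}\}$ is a periodic orbit. It is exactly the point-level uniqueness, rather than mere uniqueness up to orbit, that makes this last step go through.
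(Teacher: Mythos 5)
Your proposal is correct and follows essentially the same route as the paper's proof: replace $E$ by a symmetric neighborhood, get $D$ from the shadowing property, compose the two tracing relations through the pseudo-orbit points to land in the expansive neighborhood $E^{2}$ for uniqueness, and derive periodicity by noting that the shift of the tracing point by the period is again a tracing point. Your write-up is in fact slightly more explicit than the paper's about exactly where the symmetry of $E$ is used.
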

\begin{proof}
Let $E$ be a symmetric neighborhood of $\diag_{X}$ such that $E^{2}$ is an expansive neighborhood for $f$. Because $f$ has the shadowing property, there exists a neighborhood $D$ of $\diag_{X}$ such that any $D$-pseudo-orbit is $E$-traced by some point in $X$. We must now show uniqueness. Suppose $\{x_{i}:i\in \Z\}$ is a $D$-pseudo-orbit and that $x,y\in X$ both $E$-trace it. Then for all $i$, $(x_{i},f^{i}(x)),(x_{i},f^{i}(y))\in E$. So $(f^{i}(x),f^{i}(y))\in E^{2}$ for all $i$. Since $E^{2}$ is an expansive neighborhood for $f$, $(x,y)\in\diag_{X}$, and hence $x=y$.

Now suppose $\{y_{k}\}$ is a periodic $D$-pseudo-orbit with period $n$ and let $x$ be the unique $E$-tracing point. Then $\{y_{k+n}\}$ is a periodic $D$-pseudo-orbit and since $f^{n}(x)$ is an $E$-tracing point, it must be the unique one. But $\{y_{k}\}=\{y_{k+n}\}$, so it must be the case that $x=f^{n}(x)$, and hence $x$ is periodic.
\end{proof}

\section{Stable and unstable sets and topological stability}\label{sect:product}

We now introduce notions of stable and unstable sets.

\begin{defn} Let $x\in X$ and $B$ be a neighborhood of $\diag_{X}$.
\begin{enumerate}
\item
The \emph{local stable set of $x$ relative to $B$} is \[W^{s}_{B}(x)= \{y\in X: F^{i}(x,y)\in B, \forall\, i \geq 0\},\]
\item 
the \emph{local unstable set of $x$ relative to $B$} is \[W^{u}_{B}(x)= \{y\in X: F^{i}(x,y)\in B, \forall\, i \leq 0\},\]
\item 
the \emph{stable set} of $x$ is 
\begin{align*}
W^{s}(x)=& \{y\in X: \forall\text{ neighborhood }B\text{ of }\diag_{X},\ \exists\, n \in \N\text{ such that }\\&F^{i}(x,y)\in B, \forall\, i  \geq n \},\text{ and}
\end{align*}
\item 
the \emph{unstable set} of $x$ is 
\begin{align*}
W^{u}(x)=& \{y\in X: \forall\text{ neighborhood }B\text{ of }\diag_{X},\ \exists\, n \in \N\text{ such that }\\&F^{i}(x,y)\in B, \forall\, i  \leq -n \}.
\end{align*}
\end{enumerate}
\end{defn}

A neighborhood $U \subset X\times X$  of the diagonal $\diag_X$ is \emph{wide} if there is a compact set $\nb S\subset X$ such that $U\cup (\nb S\times X)=X\times X$. In other words, $U$ is wide if for any $x$ not in the compact set $\nb S$, the cross section $U[x]$ is $X$.

\begin{lemma}[Continuity Lemma]\label{lem:cont}
A map $f:X \to Y$ is continuous if for any wide neighborhood $U$ of $\diag_{Y}$, there exists a neighborhood $V$ of $\diag_{X}$ such that $F(V) \subset U$.
\end{lemma}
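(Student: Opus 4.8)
The plan is to verify continuity pointwise, and the whole game is dictated by the fact that the hypothesis only gives control for \emph{wide} neighborhoods of $\diag_Y$. So fix $x_0\in X$, set $y_0=f(x_0)$, and let $\nb G$ be an arbitrary open neighborhood of $y_0$ in $Y$; I must produce a neighborhood $\nb H$ of $x_0$ with $f(\nb H)\subset\nb G$. The crux is therefore to manufacture a \emph{wide} neighborhood $U$ of $\diag_Y$ whose cross section $U[y_0]$ is contained in $\nb G$, after which the hypothesis will hand me a diagonal neighborhood $V$ in $X$ that I can read off to get $\nb H$.

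For the construction I would set
\[
U = (\nb G \times \nb G)\cup\big((Y\setminus\{y_0\})\times Y\big).
\]
First I would check that this is an \emph{open} neighborhood of $\diag_Y$: both pieces are open (here I use that $Y$ is Hausdorff, so $\{y_0\}$ is closed and its complement open), and each diagonal point $(y,y)$ lands in one piece or the other according as $y=y_0$ or $y\ne y_0$. Next I would verify that $U$ is wide, with compact witness $\nb S=\{y_0\}$: indeed
\[
U\cup(\{y_0\}\times Y)\ \supset\ \big((Y\setminus\{y_0\})\times Y\big)\cup(\{y_0\}\times Y)\ =\ Y\times Y.
\]
Finally I would compute the cross section at $y_0$: since $(y_0,y)\in(Y\setminus\{y_0\})\times Y$ is impossible, membership $(y_0,y)\in U$ forces $y\in\nb G$, so $U[y_0]=\nb G$.

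With this $U$ in hand, the hypothesis supplies a neighborhood $V$ of $\diag_X$ with $F(V)\subset U$. I would then take $\nb H=V[x_0]$, which is a neighborhood of $x_0$ (any neighborhood of the diagonal contains a basic product $O_1\times O_2$ about $(x_0,x_0)$, whence $O_2\subset V[x_0]$ is an open neighborhood of $x_0$). For $b\in\nb H$ we have $(x_0,b)\in V$, so $(y_0,f(b))=F(x_0,b)\in U$, giving $f(b)\in U[y_0]=\nb G$. Thus $f(\nb H)\subset\nb G$, which is continuity at $x_0$; as $x_0$ was arbitrary, $f$ is continuous.

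The one genuinely delicate point, and the step I would be most careful about, is the wideness construction. The temptation is to shrink the cross sections of $U$ everywhere, but such a $U$ need not be wide and the hypothesis then says nothing about it. The right observation is that testing continuity at a \emph{single} point requires constraining only the single cross section over $y_0$, so one can leave every other cross section equal to the full space $Y$; Hausdorffness keeps the resulting $U$ open, and the compactness of the singleton $\{y_0\}$ is exactly what certifies $\nb S=\{y_0\}$ as an admissible witness for wideness.
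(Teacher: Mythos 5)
Your proof is correct and follows essentially the same strategy as the paper: fix $x_0$, build a wide neighborhood $U$ of $\diag_Y$ with $U[f(x_0)]$ inside the target neighborhood, and read off $V[x_0]$ from the hypothesis. Your particular choice $U=(\nb G\times\nb G)\cup((Y\setminus\{y_0\})\times Y)$ with the singleton $\{y_0\}$ as the compact witness is a minor (and slightly cleaner) variant of the paper's $(\nb B\times\nb B)\cup((Y-\cl(\nb B'))\times Y)$, since it avoids having to choose a relatively compact $\nb B'$.
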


\begin{proof}
Let $x$ be any point in $X$ and let $\nb B\subset Y$ be any neighborhood of $f(x)$; to prove continuity we will find a neighborhood $\nb A$ of $x$ such that $f(\nb A) \subset \nb B$.  Let $U$ be any wide neighborhood of $\diag_{Y}$ such that $U[f(x)]  \subset \nb B$ (for example, $U = (\nb B \times \nb B) \cup ((Y - \cl(\nb B')) \times Y)$, where $\nb B'$ is a neighborhood of $f(x)$ such that $Cl(\nb B') \subset \nb B$).  Then, by hypothesis, there exists a neighborhood $V$ of $\diag_{X}$ such that $F(V) \subset U$.  So $F(x,V[x]) = \{f(x)\} \times f(V[x]) \subset \{f(x)\}\times U[f(x)]$, so $f(V[x]) \subset U[f(x)] \subset \nb B$.  Thus we can take $\nb A= V[x]$.
\end{proof}

\begin{lemma}\label{lem:nbhd}
Let $f\in \expn(X)$ and let $A$ be a proper expansive neighborhood for $f$.  For each $N\in \N$, define $V_N(A) := \{(x,y) \in X\times X : F^n(x,y) \in A \text{ for all } |n|\le N\}$.  Then for any wide neighborhood $U$ of $\diag_X$ there exists $N \in \N$ such that $V_N(A) \subset U$. Conversely, for every $N$ there exists a neighborhood $U$ of $\diag_{X}$ such that $U\subset V_{N}$.
\end{lemma}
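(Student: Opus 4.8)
The plan is to prove the two implications separately; the converse is essentially formal, while the forward direction carries all the content. For the converse, I would observe that $V_N(A) = \bigcap_{|n|\le N} F^{-n}(A)$. Since $F = f\times f$ is a homeomorphism carrying $\diag_X$ onto $\diag_X$, and $A$ is a neighborhood of $\diag_X$, each $F^{-n}(A)$ is again a neighborhood of $\diag_X$; a finite intersection of neighborhoods is a neighborhood, so one may simply take $U = V_N(A)$ (or its interior, if an open set is wanted).

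For the forward direction I would argue by contradiction. Suppose no $N$ works, so for each $N\in\N$ there is a pair $(x_N,y_N)\in V_N(A)\setminus U$. The key is to trap these pairs inside a compact set. Since $U$ is wide, fix a compact $\nb S$ with $U\cup(\nb S\times X) = X\times X$; then $(x_N,y_N)\notin U$ forces $x_N\in\nb S$. Because the defining condition for $V_N(A)$ includes $n=0$, we have $(x_N,y_N)\in A$, hence $y_N\in A[x_N]\subset A[\nb S]$, and $A[\nb S]$ is compact by properness of $A$. Thus the whole sequence lies in the compact set $\nb S\times A[\nb S]$, which is first countable as a subspace of $X\times X$ and therefore sequentially compact; extract a subsequence $(x_{N_k},y_{N_k})\to(x,y)$, noting $N_k\to\infty$.

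Then for each fixed $n\in\Z$, once $N_k\ge|n|$ we have $F^n(x_{N_k},y_{N_k})\in A$; continuity of $F^n$ together with the closedness of the expansive neighborhood $A$ yields $F^n(x,y)\in A$ for every $n\in\Z$. Since $A$ is an expansive neighborhood, an orbit that never leaves $A$ must be diagonal, so $x=y$ and the limit $(x,y)$ lies on $\diag_X$. But $U$ is a neighborhood of $\diag_X$, so its interior is an open set containing $(x,x)$ and hence containing $(x_{N_k},y_{N_k})$ for all large $k$, contradicting $(x_{N_k},y_{N_k})\notin U$.

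I expect the main obstacle to be the compactness-trapping step, as that is where the hypotheses genuinely interact: wideness of $U$ supplies the compact set $\nb S$ bounding the first coordinates, and properness of $A$ promotes this to compactness of the second coordinates through $A[\nb S]$. The other point requiring care is the passage from compactness to sequential compactness, which relies on the standing first countability assumption and lets me argue with subsequences rather than nets; once that is in place, the closedness of $A$ and expansivity close the argument cleanly.
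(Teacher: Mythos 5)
Your proof is correct and follows essentially the same route as the paper's: for the forward direction, both argue by contradiction, trap the offending pairs in the compact set $\nb S\times A[\nb S]$ using wideness of $U$ and properness of $A$, pass to a convergent subsequence, and use closedness and expansivity of $A$ to force the limit onto the diagonal; for the converse, both observe that $V_N(A)$ is a finite intersection of neighborhoods $F^{-n}(A)$ of the diagonal. The only cosmetic difference is that the paper derives the contradiction by noting the limit point lies off the diagonal (being outside $\Int U$) before showing expansivity forces it onto the diagonal, whereas you reverse the order.
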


\begin{proof}
Suppose there is a wide neighborhood $U$ of $\diag_X$ such that for each $N\in \N$ there exists $(x_{N},y_{N})\in V_{N}(A)\cap (X\times X - U)$. Let $L= \{(x_{N},y_{N}):N \in \N\}$. Because $U$ is a wide neighborhood of $\diag$, there exists a compact set $\nb S\subset X$ such that $U\cup (\nb S\times X)=X\times X$. Then $L \subset (X\times X -U)\cap A \subset \nb S \times A[S]$, which is compact, so $\cl L$ has a limit point, $(p,q)$.  Clearly $p\neq q$. On the other hand, choose a subsequence $(x_{N_{k}},y_{N_{k}}) $ in $L$ converging to $(p,q)$ as $k\to \infty$. Observe that for any integer $i$, $F^{i}(p,q) = \lim_{k\to\infty} F^{i}(x_{N_{k}},y_{N_{k}}) \in A$, since $F^{i}(x_{N_{k}},y_{N_{k}}) \in A$ for $|i| \le N_k$.  Since $F^{i}(p,q)  \in A$ for all $i$ and $A$ is an expansive neighborhood, we must have that $p=q$, which is a contradiction.

%\begin{eqnarray*}
%F^{i}(p,q) = \lim F^{i}(x_{N_{k}},y_{N_{k}})
%\end{eqnarray*}
%as  $k \to \infty; N_{k}>|i|$ and
%\begin{eqnarray*}
%F^{l}(x_{N_{k}},y_{N_{k}})\in \cl A
%\end{eqnarray*}
%for all $|l|\leq N_{k}$. Since $(x_{N_{k}},y_{N_{k}})\in V_{N_{k}}$, we get $F^{i}(p,q)\in \cl A$ for all $i$. As $A$ is an expansive neighborhood for $f$, we have $p=q$. This is a contradiction.

For the converse, for each $n$ with $|n|\leq N$ we have a neighborhood $U_{n}$ of $\diag_{X}$ such that $F(U_{n})\subset A$. Take $U = \bigcap_{|n|\leq N} U_{n}$. This completes our proof.
\end{proof}

\begin{lemma}\label{lem:locglob}
If $B$ is a proper expansive neighborhood for $f$ and $x$ is a periodic point, then $W^{\sigma}_{B}(x)\subset W^{\sigma} (x)$, where $\sigma = s,u$.
\end{lemma}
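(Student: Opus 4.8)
The plan is to prove the statement for the stable case $\sigma = s$ and to deduce the unstable case by applying it to $f^{-1}$. This reduction is clean: $f^{-1}$ is again expansive with the \emph{same} proper expansive neighborhood $B$ (since the product of $f^{-1}$ is $F^{-1}$, and $\{(F^{-1})^n\}_{n\in\Z}=\{F^{-n}\}_{n\in\Z}=\{F^n\}_{n\in\Z}$, so the defining condition for $B$ is unchanged, and properness does not involve $f$); moreover $x$ is periodic for $f$ iff it is periodic for $f^{-1}$, and directly from the definitions $W^u_B(x)$ and $W^u(x)$ for $f$ coincide with $W^s_B(x)$ and $W^s(x)$ for $f^{-1}$.

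So I would fix $y\in W^s_B(x)$, meaning $F^i(x,y)\in B$ for all $i\ge 0$, and let $p$ be the period of $x$, so that the forward orbit of $x$ is the finite (hence compact) set $\nb O=\{x,f(x),\dots,f^{p-1}(x)\}$. The first key step is to trap the forward orbit of $y$ in a compact set: for every $i\ge 0$ we have $f^i(y)\in B[f^i(x)]\subset B[\nb O]$, and since $B$ is proper and $\nb O$ is compact, the set $K:=B[\nb O]$ is compact and contains $f^i(y)$ for all $i\ge 0$. This is the single place where both hypotheses are used, periodicity (to make the orbit of $x$ finite) and properness of $B$ (to make $K$ compact).

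The core is then a compactness-plus-expansivity argument modeled on the proof of Lemma~\ref{lem:nbhd}. Suppose $y\notin W^s(x)$. Replacing the offending neighborhood by its interior, there is an \emph{open} neighborhood $B'$ of $\diag_X$ and a sequence $i_k\to\infty$ with $F^{i_k}(x,y)\notin B'$. Each $F^{i_k}(x,y)$ lies in the compact set $\nb O\times K$, and since $X$ is first countable this set is sequentially compact, so I can pass to a subsequence along which the residue $i_k \bmod p$ equals a fixed $j$ (so $f^{i_k}(x)=f^{j}(x)=:x_j$ is constant) and $f^{i_k}(y)\to y^\ast\in K$; thus $F^{i_k}(x,y)\to(x_j,y^\ast)$. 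Because $B'$ is open with $\diag_X\subset B'$ while the terms avoid $B'$, the limit avoids $B'$ as well, so $(x_j,y^\ast)\notin\diag_X$, i.e.\ $x_j\neq y^\ast$. Finally, for each fixed $m\in\Z$, continuity of $F^m$ gives $F^{m+i_k}(x,y)=F^m\bigl(F^{i_k}(x,y)\bigr)\to F^m(x_j,y^\ast)$; for all large $k$ we have $m+i_k\ge 0$, hence $F^{m+i_k}(x,y)\in B$, and since $B$ is closed the limit $F^m(x_j,y^\ast)$ lies in $B$. Therefore $F^m(x_j,y^\ast)\in B$ for every $m\in\Z$ with $x_j\neq y^\ast$, contradicting that $B$ is an expansive neighborhood. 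Hence $y\in W^s(x)$.

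I expect the main obstacle to be exactly the compactness issue in the third paragraph: in the classical metric proof one extracts the limit point $(x_j,y^\ast)$ using global compactness of the space, which is unavailable here, and the whole point of the lemma is that periodicity of $x$ together with properness of $B$ supplies a fixed compact set $K$ holding the orbit of $y$ as a substitute. The remaining details are routine: taking $B'$ open, using first countability for the sequential extraction, and using closedness of the expansive neighborhood $B$ to keep the limits $F^m(x_j,y^\ast)$ inside $B$.
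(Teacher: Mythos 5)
Your proof is correct and follows essentially the same strategy as the paper's: argue by contradiction, use the pigeonhole principle on the finite periodic orbit together with properness of $B$ to extract a convergent subsequence $F^{i_k}(x,y)\to(x_j,y^\ast)$ with $x_j\neq y^\ast$, show the full orbit of $(x_j,y^\ast)$ stays in $B$, and contradict expansiveness. Your handling of the negative iterates (applying continuity of $F^m$ to the convergent subsequence and using closedness of $B$) is in fact a little cleaner than the paper's construction of the auxiliary limits $y_{-i}$, but it is the same underlying argument.
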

\begin{proof}
We will prove the case $\sigma=s$; the case $\sigma=u$ is similar. For the sake of contradiction, suppose there is a periodic point $x\in X$ and a proper expansive neighborhood $B$ such that $W_{B}^{s}(x)\not\subset W^{s}(x)$. Say that $x$ has period $p$ and the periodic orbit is $\{x_{0},\ldots,x_{p-1}\}$. Because $W_{B}^{s}(x)\not\subset W^{s}(x)$, there is a $y\in W^{s}_{B}(x)$ and a neighborhood $C$ of $\diag_{X}$ such that $F^{n}(x,y)\not\in C$ for infinitely many $n>0$. So, there exist infinitely many points of the form $(x_{j},f^{n_{k}}(y))$ for some $0\le j<p-1$. Because $B[x_{j}]$ is compact, the sequence $\{(x_{j},f^{n_{k}}(y))\}$ has a limit point $(x_{j},y_{0})\in B-\Int(C)$ (without loss of generality, assume that the sequence is convergent). Clearly, $F^{n}(x_{j},y_{0})\in B$ for all $n\ge 0$. For each $i>0$ consider the sequence $\{(f^{-i}(x_{j}),f^{n_{k}-i}(y))\}$. It has a limit  $(x_{j-i},y_{-i})\in \{x_{j-i}\}\times B[x_{j-i}]$ (where the subscript of $x$ is taken mod $p$). Then $f(y_{-i})=y_{1-i}$ for all $i>0$. Thus, $F^{n}(x_{j},y_{0})=(f^{n}(x_{j}),y_{n})\in B$ for all $n<0$. By expansiveness, this implies that $(x_{j},y_{0})\in\diag_{X}$. But $(x_{j},y_{0})\not\in\Int(C)$. So this is a contradiction.
\end{proof}

The results in the following proposition are generalizations of the ones for compact spaces (\cite[Theorem~4.1.1, Lemma 2.4.1(1)]{Aoki:1994}).

\begin{prop}\label{prop:unstabstabsets}
Let $f$ be topologically Anosov. Then we can find neighborhoods $B$ and $D$ of $\diag_{X}$ and a continuous map $t:D\to X$ such that
\begin{enumerate}
\item 
$W^{s}_{B}(x)\cap W^{u}_{B}(y)$ contains at most one point for any  $x, y\in X$,
\item 
$W^{s}_{B}(x)\cap W^{u}_{B}(y)=\{t(x,y)\}$ if $(x,y)\in D$,
\item 
$W^{s}_{B}(x)\cap D[x]= \{y:y=t(x,y),$ where $(x,y)\in D\}$,
\item 
$W^{u}_{B}(x)\cap D[x]= \{y:y=t(y,x),$ where $(x,y)\in D\}$,
\end{enumerate}
\end{prop}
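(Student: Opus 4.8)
The plan is to manufacture the map $t$ from the unique shadowing of Lemma~\ref{lem:uniqueshadowing} and then read (1), (3), (4) off from expansiveness together with the trivial fact that every point lies in its own local stable and unstable set. First I would fix the neighborhoods. By the lemma preceding this section $f$ has a closed proper expansive neighborhood $N$; choose a closed symmetric neighborhood $B$ of $\diag_X$ with $B^2\subset N$, so that $B$ is itself proper (as $B\subset N$ with $N$ proper). Next take a symmetric neighborhood $E$ whose square $E^2$ is an expansive neighborhood contained in $B$, and apply Lemma~\ref{lem:uniqueshadowing} to produce a symmetric neighborhood $D\subset E$ such that every $D$-pseudo-orbit is $E$-traced by exactly one orbit; these are the $B$ and $D$ of the statement. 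Property (1) is then immediate: if $z,z'\in W^s_B(x)\cap W^u_B(y)$, then for $i\ge 0$ symmetry of $B$ gives $(f^i(z),f^i(x))\in B$ and $(f^i(x),f^i(z'))\in B$, whence $F^i(z,z')\in B^2$, and the unstable condition gives the same for $i\le 0$; thus $F^i(z,z')\in B^2\subset N$ for all $i$, and expansiveness of $N$ forces $z=z'$.

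For (2), given $(x,y)\in D$ I would form the sequence $\{z_i\}$ with $z_i=f^i(y)$ for $i<0$ and $z_i=f^i(x)$ for $i\ge 0$. Every consecutive pair lies on a genuine orbit except at the junction $i=0$, where the required $(f(z_{-1}),z_0)=(y,x)\in D$ holds by symmetry of $D$; hence $\{z_i\}$ is a $D$-pseudo-orbit. Let $t(x,y)$ be its unique $E$-tracing point $w$. For $i\ge 0$ the tracing gives $F^i(x,w)\in E\subset B$, so $w\in W^s_B(x)$; for $i<0$ it gives $F^i(y,w)\in E\subset B$, while at $i=0$ combining $(y,x)\in D\subset E$ with $(x,w)\in E$ yields $(y,w)\in E^2\subset B$, so $w\in W^u_B(y)$. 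Therefore $w\in W^s_B(x)\cap W^u_B(y)$, which by (1) is a single point, giving $W^s_B(x)\cap W^u_B(y)=\{t(x,y)\}$.

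The \textbf{main obstacle} is the continuity of $t$, which I would handle sequentially since $X$ is first countable. Suppose $(x_k,y_k)\to(x_0,y_0)$ in $D$. Each $t(x_k,y_k)$ lies in $B[x_k]$, and as $\{x_k\}\cup\{x_0\}$ is compact, properness of $B$ confines the values $t(x_k,y_k)$ to a compact set, so every subsequence has a convergent sub-subsequence. If $w'$ is any such subsequential limit, then continuity of each $F^i$ and closedness of $B$ give $F^i(x_0,w')\in B$ for $i\ge 0$ and $F^i(y_0,w')\in B$ for $i\le 0$; hence $w'\in W^s_B(x_0)\cap W^u_B(y_0)$, which equals $\{t(x_0,y_0)\}$ by (1)–(2). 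As every subsequential limit is $t(x_0,y_0)$ and the values lie in a compact set, $t(x_k,y_k)\to t(x_0,y_0)$, proving continuity.

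Finally, (3) and (4) are formal consequences. For (3), observe that $y\in W^u_B(y)$ always holds, since the orbit of $(y,y)$ stays in $\diag_X\subset B$. Thus if $y\in W^s_B(x)\cap D[x]$ then $y\in W^s_B(x)\cap W^u_B(y)=\{t(x,y)\}$, so $y=t(x,y)$; conversely $t(x,y)\in W^s_B(x)$ by (2) and $t(x,y)\in D[x]$ by hypothesis. Part (4) is the same argument after interchanging the roles of $x$ and $y$, using $y\in W^s_B(y)$ and the symmetry of $D$ to pass from $(x,y)\in D$ to $(y,x)\in D$ so that (2) applies to $t(y,x)$.
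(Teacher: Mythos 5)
Your proof is correct, and its core is the same as the paper's: $t(x,y)$ is the unique $E$-tracing point (Lemma~\ref{lem:uniqueshadowing}) of the pseudo-orbit spliced from the backward orbit of $y$ and the forward orbit of $x$, and (1) follows from expansiveness of a suitable power of $B$. The differences are in the supporting steps, and they are worth noting. You handle the $i=0$ membership $(y,t(x,y))\in B$ for the unstable set by composing $(y,x)\in D\subset E$ with $(x,t(x,y))\in E$ to land in $E^{2}\subset B$, where the paper instead builds this in by taking $E=B\cap F^{-1}(B)$; both work. For (3) and (4) the paper reruns the tracing-plus-expansiveness argument, while you deduce them formally from (1), (2), and the trivial facts $y\in W^{u}_{B}(y)$, $y\in W^{s}_{B}(y)$ together with symmetry of $D$ --- shorter and equally valid. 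The genuinely different step is continuity of $t$: the paper proves it ``uniformly'' via the Continuity Lemma (Lemma~\ref{lem:cont}) and Lemma~\ref{lem:nbhd}, exhibiting for each wide neighborhood $U$ an explicit neighborhood $V$ with $T(V)\subset V_{N}(A)\subset U$; you argue sequentially, using properness of the closed expansive neighborhood to trap $\{t(x_k,y_k)\}$ in a compact set, closedness of $B$ to force every subsequential limit into $W^{s}_{B}(x_0)\cap W^{u}_{B}(y_0)=\{t(x_0,y_0)\}$, and first countability to upgrade sequential continuity to continuity. Your route is more elementary and leans directly on the standing hypotheses (first countable, locally compact, Hausdorff) and on the characterization of $t(x_0,y_0)$ from (1)--(2); the paper's route yields a quantitative neighborhood-level statement that does not pass through sequences. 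Do keep the requirement that $B$ be closed explicit, as both the transfer of properness from $N$ to $B$ and the passage to limits depend on it; with that, the argument is complete.
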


\begin{proof}
Let $A$ be an expansive neighborhood for $f$. Let $B$ be a symmetric neighborhood of $\diag_{X}$ such that $B^{3}\subset A$, and let $E=B\cap F^{-1}(B)$, which is also a symmetric neighborhood of $\diag_{X}$.  Since $f$ has the shadowing property, there exists a neighborhood $D$ of $\diag_{X}$ such that every $D$-pseudo-orbit is $E$-traced by some orbit in $X$. We claim that $E^{2}$ is an expansive neighborhood for $f$, and hence by Lemma \ref{lem:uniqueshadowing} the tracing point is unique. Let $(x,y)\in E^{2}$. Then there is a $z\in X$ such that $(x,z),(z,y)\in E$. So $(f(x),f(z)),(f(z),f(y))\in B$ and hence $(f(x),f(y))=F(x,y)\in B^{2}\subset B^{3}\subset A$. This implies that $F(E^{2})$ is an expansive neighborhood for $f$, and hence so is $E^{2}$.

For each point $(x,y) \in D$, define a $D$-pseudo-orbit $\{x_{i}\}$ in $X$ by
\[
x_i=\begin{cases}
f^i(x) & \text{if }  i \geq 0 \\
f^i(y) & \text{if } i < 0.
\end{cases}
\]
Let $t(x,y)$ denote the unique $E$-tracing point. This defines a map $t:D\rightarrow X$ (we postpone the proof of continuity to the end of the proof). 

Let $(x, y)\in D$. Then $F^{n}(x,t(x,y))\in E\subset B$ for all $n\geq 0$, so $t(x,y)\in W_{B}^{s}(x)$. Likewise, $F^{n}(y,t(x,y))\in E\subset B$ for all $n < 0$ and since $F(E)\subset B$, $F^{-1}(y,t(x,y)) \in E$ implies $(y,t(x,y)) \in B$. So $t(x,y)\in W_{B}^{u}$. In particular,  \[t(x,y)\in W^{s}_{B}(x)\cap W^{u}_{B}(y).\] Furthermore, the expansivity of $f$ does not allow $W^{s}_{B}(x)\cap W^{u}_{B}(y)$ to have more than one point. This proves (1) and (2).

We prove (3); the proof of (4) is similar.  To prove equality we prove containment in both directions. ($\supset$): Let $y=t(x,y)$ and $(x,y)\in D$. Then $y$ $E$-traces $x$ in forward time. So $F^{n}(x,y)\in E\subset B$ for all $n\ge 0$. Thus $y\in W^{s}_{B}(x)\cap D[x]$. ($\subset$): Suppose $y\in W^{s}_{B}(x)\cap D[x]$. Then, $F^{n}(x,y)\in B$ for all $n\ge 0$. Clearly $F^{n}(y,y)\in B$ for all $n<0$, so $y$ $B$-traces the pseudo-orbit $\{x_{i}\}$ (defined as above). By definition, the point $t(x,y)$ $E$-traces this pseudo-orbit, and because $E\subset B$, it also $B$-traces it. So, for all $n\in\Z$, $(f^{n}(t(x,y),x_{n}))\in B$ and $(f^{n}(y),x_{n})\in B$, which implies that $F^{n}(t(x,y),y)\in B^{2}\subset A$. Thus, by expansiveness, $y=t(x,y)$.

We now show that  the tracing map $t$ is continuous. Let $U$ be a wide neighborhood of $\diag_{X}$. By Lemma \ref{lem:cont} we must find a neighborhood $V$ of $\diag_{X}$ such that $T(V)\subset U$, where $T=t\times t$. By Lemma \ref{lem:nbhd}, we can find $N$ such that $V_{N}(A)\subset U$. Define neighborhoods of $\diag_{X}$  \[W_{1}= \bigcap_{n=0}^{N}F^{-n}(B)\text{ and }W_{2}= \bigcap_{n=0}^{N}F^{n}(B).\] Let $g:X^{4}\to X^{4}$ be the homeomorphism given by \[g(x,y,x_{1},y_{1})=(x,x_{1},y,y_{1}),\] and take \[V=g^{-1}(W_{1}\times W_{2})\cap (D\times D).\]  Clearly $V$ is a neighborhood of $\diag_{D}$. Let $(x,y,x_{1},y_{1})\in V$. Because $(x,y),(x_{1},y_{1})\in D$,  $F^{n}(t(x,y),x),F^{n}(x_{1},t(x_{1},y_{1}))\in B$  for $0\le n\le N$, and because $(x,x_{1})\in W_{1}$, $F^{n}(x,x_{1})\in B$ for $0\le n\le N$. So $F^{n}(t(x,y),t(x_{1},y_{1}))\in B^{3}\subset A$ for $0\le n\le N$. Similarly, we can show that $F^{n}(t(x,y),t(x_{1},y_{1}))\in A$ for $-N\le n\le 0$, and hence $T(V)\subset V_{N}(A)\subset U$. Thus $t$ is continuous.
\end{proof}

\begin{defn}
A homeomorphism $f:X\to X$ is called \textit{topologically stable} if for any neighborhood $B$ of  $\diag_{X}$ there exists a neighborhood $D$ of $\diag_{X}$ such that for any homeomorphism $g:X\to X$ satisfying $(f(x),g(x))\in D$, for all $x\in X$ there exists a continuous self map $h$ of $X$ satisfying $(h(x),x)\in B$, for all $x\in X$ and $f\circ h=h\circ g$.
\end{defn}

We close this section by proving the following extension of a main result in \cite{Walters:1978}.

\begin{thm} 
If X is a first countable locally compact paracompact Hausdorff space and $f:X\to X$ is topologically Anosov, then $f$ is topologically stable.
\end{thm}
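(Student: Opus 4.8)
The plan is to construct the semiconjugacy $h$ directly from shadowing: given a perturbation $g$, the $g$-orbit of each point is a pseudo-orbit for $f$, and its \emph{unique} shadowing point (provided by Lemma~\ref{lem:uniqueshadowing}) will be $h(x)$. First I would fix the target neighborhood $B$ and set up constants. Let $A$ be a proper expansive neighborhood for $f$, and choose a symmetric neighborhood $E$ of $\diag_X$ with $E\subset B$ and $E^{3}\subset A$ (this is the same kind of choice as $B^3\subset A$ made in the proof of Proposition~\ref{prop:unstabstabsets}). Then $E^{2}\subset A$ is itself an expansive neighborhood, so Lemma~\ref{lem:uniqueshadowing} yields a neighborhood $D$ of $\diag_X$ for which every $D$-pseudo-orbit is $E$-traced by exactly one orbit. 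This $D$ is the neighborhood required by the definition of topological stability.

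Next, given any homeomorphism $g$ with $(f(x),g(x))\in D$ for all $x$, I would observe that for each $x$ the bi-infinite sequence $\{g^{n}(x)\}_{n\in\Z}$ is a $D$-pseudo-orbit, since $(f(g^{n}(x)),g^{n+1}(x))=(f(g^{n}(x)),g(g^{n}(x)))\in D$. Define $h(x)$ to be the unique point $E$-tracing this pseudo-orbit, so that $(f^{n}(h(x)),g^{n}(x))\in E$ for all $n$. Taking $n=0$ immediately gives $(h(x),x)\in E\subset B$. For the conjugacy relation, note that $f(h(x))$ $E$-traces the shifted pseudo-orbit $\{g^{n+1}(x)\}$, which is precisely the $g$-orbit of $g(x)$; by the uniqueness clause of Lemma~\ref{lem:uniqueshadowing}, $f(h(x))=h(g(x))$, i.e.\ $f\circ h=h\circ g$.

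The main work, and the step I expect to be the chief obstacle, is continuity of $h$, where the noncompact topology enters in an essential way. I would invoke the Continuity Lemma (Lemma~\ref{lem:cont}): it suffices to show that for every \emph{wide} neighborhood $U$ of $\diag_X$ there is a neighborhood $V$ of $\diag_X$ with $H(V)\subset U$, where $H=h\times h$. Given such a wide $U$, Lemma~\ref{lem:nbhd} produces $N\in\N$ with $V_{N}(A)\subset U$. I would then set $V=\bigcap_{|n|\le N}G^{-n}(E)$, where $G=g\times g$; since $g$ is a homeomorphism, each $G^{-n}(E)$ is a neighborhood of $\diag_X$ and the finite intersection $V$ is as well. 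If $(x,x')\in V$, then $(g^{n}(x),g^{n}(x'))\in E$ for all $|n|\le N$; combining this with $(f^{n}(h(x)),g^{n}(x))\in E$ and $(g^{n}(x'),f^{n}(h(x')))\in E$ (using symmetry of $E$) gives $(f^{n}(h(x)),f^{n}(h(x')))\in E^{3}\subset A$ for $|n|\le N$, which is exactly the statement $(h(x),h(x'))\in V_{N}(A)\subset U$. Hence $H(V)\subset U$, and $h$ is continuous.

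Two technical points deserve attention when filling in the details. The first is the choice of $E$ with $E^{3}\subset A$, which relies on the same composition property of diagonal neighborhoods already used freely in Proposition~\ref{prop:unstabstabsets}. The second is that the neighborhood $V$ constructed for the continuity argument depends on the perturbation $g$; this is legitimate because the definition of topological stability only asks that the map $h$ produced for each given $g$ be continuous, not that a single $V$ work uniformly in $g$. With these observations the four required properties of $h$ are all established, completing the proof that $f$ is topologically stable.
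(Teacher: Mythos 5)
Your proposal is correct and follows essentially the same route as the paper's proof: the same choice of $E$ with $E^{3}\subset A\cap B$, the same construction of $h$ via unique $E$-tracing of $g$-orbits, and the same continuity argument via Lemma~\ref{lem:cont}, Lemma~\ref{lem:nbhd}, and the neighborhood $\bigcap_{|n|\le N}G^{n}(E)$. The only (harmless) variation is that you obtain $f\circ h=h\circ g$ from the uniqueness clause applied to the shifted pseudo-orbit, whereas the paper reruns the expansiveness argument with $E^{2}\subset A$ directly.
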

\begin{proof}
Let $B$ be a neighborhood of $\diag_{X}$. Let $A$ be a proper expansive neighborhood for $f$ and $E$ be a symmetric neighborhood of $\diag_{X}$ such that $E^{3}\subset A\cap B$. By Lemma \ref{lem:uniqueshadowing} there is a neighborhood $D$ of $\diag_{X}$ such that every $D$-pseudo-orbit is uniquely $E$-traced by some point of $X$. Now take a homeomorphism $g:X\to X$ which is $D$-close to $f$. Let $x\in X$. Because $(f(g^{n-1}(x)),g^{n}(x))\in D$ for all $n\in \Z$, $\{g^{n}(x): n\in \Z \}$ is a $D$-pseudo-orbit for $f$. By the shadowing property, this defines a map $h:X\to X$ sending $x$ to the unique point $h(x)$ that $E$-traces the pseudo-orbit. 

Thus for any $x\in X$, we have $(f^{n}(h(x)),g^{n}(x))\in E$ for $n\in \Z$. In particular, $(h(x),x)\in E\subset E^{3}\subset B$ for all  $x\in X $. Furthermore, substituting $g(x)$ for $x$ yields \[(f^{n}(h(g(x))),g^{n}(g(x)))=(f^{n}((h\circ g)(x)),g^{n}(g(x)))\in E\] and \[(f^{n+1}(h(x)),g^{n+1}(x))=(f^{n}((f\circ h)(x)),g^{n}(g(x))) \in E\] for all  $n\in \Z$. This implies that \[(f^{n}((f\circ h)(x)),f^{n}((h\circ g)(x))) \in E^{2}\subset A \]  for all $n\in \Z$, and so $(f\circ h)(x)=(h\circ g)(x)$. 

Now we will use Lemma \ref{lem:cont} to show that $h$ is continuous. Let $U$ be a wide neighborhood of $\diag_{X}$. By Lemma \ref{lem:nbhd} there exists $N\in \N$ such that $V_{N}(A)\subset U$. Let $W=\bigcap_{|k|\leq N}G^{k}(E)$, where $G=g\times g$.  We must show that $h(W) \subset U$. Let  $(x,y)\in W$. If $|n|\leq N$, 
then $(h(g^{n}(x)),g^{n}(x))$, $(g^{n}(x),g^{n}(y))$, and $(h(g^{n}(y)),g^{n}(y))$ are in $E$, and hence  \[(f^{n}(h(x)),f^{n}(h(y)))= (h(g^{n}(x)),h(g^{n}(y)))\in E^{3}\subset A.\] Thus, $(h(x), h(y))\in V_{N}(A)\subset U$.
\end{proof}

\begin{remark} 
In fact, we can prove something slightly stronger than the conclusion of the previous theorem. If the neighborhood $B$ is symmetric and $B^{2}$ is an expansive neighborhood for $f$, then $h$ is unique. Suppose $h'$ is another continuous map satisfying the same properties as $h$. Then for all $n\in \Z$, $(h(g^{n}(x)),g^{n}(x)),(h'(g^{n}(x)),g^{n}(x))\in B$, and hence $(f^{n}(h(x)),f^{n}(h'(x)))=(h(g^{n}(x)),h'(g^{n}(x)))\in B^{2}$. So, $h(x)=h'(x)$.
\end{remark}

\begin{remark} 
If the homeomorphism $g$ in the above theorem is also expansive with $B^{2}$ an expansive neighborhood  and $B$ is symmetric, then $h$ is injective. Indeed, suppose $h(x)=h(y)$ for some $x,y\in X$. Then for any $n\in \Z$,  \[h(g^{n}(x))=f^{n}(h(x))=f^{n}(h(y))=h(g^{n}(y)).\] In particular, $(h(g^{n}(x)),g^{n}(x)),(h(g^{n}(y)),g^{n}(y))=(h(g^{n}(x)),g^{n}(y))\in B$, so $(g^{n}(x),g^{n}(y))\in B^{2}$ for all $n\in \Z$. Thus $x=y$. 
\end{remark}

\section{Chain recurrence and decomposition theorem} \label{sect:theorem}

Chain recurrence is an important notion in dynamical systems defined on metric spaces. In this section we introduce a new, topological generalization of chain recurrence. We then prove our main theorem, Theorem \ref{thm:spectral}. 

\begin{defn}
Let $x,y\in X$. If there is an $A$-chain from $x$ to $y$ and one from $y$ to $x$ for every neighborhood $A$ of $\diag_{X}$, then we write $x\sim y$. The set $\rec(f)= \{x \in X:x\sim x \} $ is called the \textit{chain recurrent set} for $f$. The relation $\sim$ induces an equivalence relation on $\rec(f)$; the equivalence classes are called \textit{chain components} of $f$. 
\end{defn}

\begin{defn}
The \emph{non-wandering set} of $f$ is $\nw (f)=\{x\in X: \ \text{for any open neighborhood } \nb G  \text{ of } x, f^{n}(\nb G)\cap \nb G \neq \emptyset\text{ for some } n>0\}$.
\end{defn}

\begin{prop}\label{prop:nwchrec}
If $f\in S(X)$, then $\nw (f)= \rec(f)$.
\end{prop}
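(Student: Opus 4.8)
The plan is to prove the two inclusions $\nw(f)\subseteq\rec(f)$ and $\rec(f)\subseteq\nw(f)$ separately. The first holds for every homeomorphism; only the second uses the shadowing hypothesis, so that is where the content of the proposition sits. In both directions I will translate ``$\varepsilon$-close'' into ``lies in a prescribed cross section $U[x]$'' and move between neighborhoods of $x$ and neighborhoods of $\diag_X$ using the basic fact, recorded in the preliminaries, that the cross sections $U[x]$ form a neighborhood base at $x$.

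For $\rec(f)\subseteq\nw(f)$, let $x\in\rec(f)$ and let $\nb G$ be an open neighborhood of $x$; I must produce some $n>0$ with $f^{n}(\nb G)\cap\nb G\neq\emptyset$. First choose a neighborhood $E_{0}$ of $\diag_{X}$ with $E_{0}[x]\subset\nb G$ and set $E=E_{0}\cap E_{0}^{T}$, a symmetric neighborhood with $E[x]\subset\nb G$. Apply the shadowing property to get a neighborhood $D$ of $\diag_{X}$ for which every $D$-pseudo-orbit is $E$-traced. Since $x\sim x$, there is a $D$-chain $x=x_{0},x_{1},\dots,x_{m}=x$ with $m\ge 1$. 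The key step is to wrap this finite chain into a \emph{periodic} $D$-pseudo-orbit $\{y_{i}\}_{i\in\Z}$ of period $m$, by setting $y_{i}=x_{(i\bmod m)}$; the pseudo-orbit condition at the seam holds precisely because $x_{0}=x_{m}$. Tracing it by a point $p$ gives $(p,x)=(f^{0}(p),y_{0})\in E$ and $(f^{m}(p),x)=(f^{m}(p),y_{m})\in E$, so by symmetry $p$ and $f^{m}(p)$ both lie in $E[x]\subset\nb G$. Hence $f^{m}(p)\in f^{m}(\nb G)\cap\nb G$ with $m\ge 1$, proving $x\in\nw(f)$. Note that only the existence of a tracing point is used here, not uniqueness, so Lemma~\ref{lem:uniqueshadowing} is not needed.

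For $\nw(f)\subseteq\rec(f)$, let $x\in\nw(f)$ and let $A$ be an arbitrary neighborhood of $\diag_{X}$; I must build an $A$-chain from $x$ to $x$. Choose a symmetric neighborhood $V$ of $\diag_{X}$ with $V^{2}\subset A$ (the same composition refinement already used freely in the paper, available because $X$ is paracompact Hausdorff). Using continuity of $f$, take an open neighborhood $\nb G$ of $x$ with $\nb G\subset V[x]$ and $f(\nb G)\subset V[f(x)]$. Since $x\in\nw(f)$, the set $\nb G$ contains a point $z$ with $f^{n}(z)\in\nb G$ for some $n\ge 1$. When $n\ge 2$, the sequence $x,\ f(z),\ f^{2}(z),\dots,f^{n-1}(z),\ x$ is an $A$-chain: the opening step $(f(x),f(z))$ lies in $V$ by the choice $f(\nb G)\subset V[f(x)]$, the interior steps are exact and so lie in $\diag_{X}$, and the closing step $(f(f^{n-1}(z)),x)=(f^{n}(z),x)$ lies in $V$ because $f^{n}(z)\in\nb G\subset V[x]$ and $V$ is symmetric.

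I expect the one genuinely delicate point to be the degenerate case $n=1$: then the returning orbit segment is too short to form a chain, and the naive loop collapses to the single step $(f(x),x)$, which need not lie in $A$. The resolution is to observe that both $z$ and $f(z)$ sit in $\nb G$ near $x$, so from $f(z)\in V[f(x)]$ we get $(f(x),f(z))\in V$, and from $f(z)\in\nb G\subset V[x]$ together with symmetry we get $(f(z),x)\in V$; hence $(f(x),x)\in V^{2}\subset A$ and the trivial chain $x,x$ is a valid $A$-chain. This is exactly where the refinement $V^{2}\subset A$ is forced, and it is the main technical wrinkle in an otherwise routine argument. Combining the two inclusions yields $\nw(f)=\rec(f)$.
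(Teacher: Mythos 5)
Your proof is correct and follows essentially the same route as the paper: the substantive inclusion $\rec(f)\subseteq\nw(f)$ is obtained exactly as in the paper's proof, by extending a $D$-chain from $x$ to itself to a full $D$-pseudo-orbit and invoking shadowing (your periodic extension is one valid choice; the paper extends the chain arbitrarily, and your explicit symmetrization of $E$ is a welcome bit of care). The reverse inclusion $\nw(f)\subseteq\rec(f)$, which you establish via the refinement $V^{2}\subset A$ and the degenerate case $n=1$, is simply asserted to be ``clear from the definitions'' in the paper, so your write-up fills in a detail rather than taking a different path.
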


\begin{proof}
It is clear from the definitions that $\nw(f) \subset \rec(f)$, so we must show the opposite inclusion.  Let $x \in \rec(f)$ and $\nb U$ be an open neighborhood of $x$. Let $E$ be a neighborhood of $\diag_{X}$ such that $E[x]\subset \nb U$. By the shadowing property there exists a neighborhood $D$ of $\diag_{X}$ such that every $D$-pseudo-orbit is $E$-traced by some point. Since $x$ is chain recurrent, there exists a $D$-chain $\{x_{0}=x,x_{1},\ldots,x_{n}=x\}$.  We can extend this $D$-chain (in any way we want) to a full $D$-pseudo-orbit. This pseudo-orbit is $E$-traced by a point $y \in X$. This means that $y,f^{n}(y)\in E[x]\subset \nb U$, and so $x \in \nw (f)$.
\end{proof}

We have the following generalization of a result proved by Yang (\cite[Lemma~1(2)]{Yang}); Yang uses the metric definitions of expansivity, shadowing, and chain recurrence.

\begin{prop}\label{prop:denseper}
If $f$ is topologically Anosov, then $\per(f)$ is dense in $\rec(f)$. 
\end{prop}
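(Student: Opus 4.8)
The plan is to show that every point of $\rec(f)$ is a limit of periodic points, i.e.\ that for an arbitrary $x\in\rec(f)$ and an arbitrary open neighborhood $\nb U$ of $x$ there is a periodic point lying in $\nb U$. Since periodic points are trivially chain recurrent, $\per(f)\subset\rec(f)$, and establishing this locating property at every $x\in\rec(f)$ gives exactly the density of $\per(f)$ in $\rec(f)$. The engine of the argument is the last sentence of Lemma~\ref{lem:uniqueshadowing}: a \emph{periodic} pseudo-orbit is traced by a \emph{periodic} orbit. So if I can manufacture a periodic pseudo-orbit passing through $x$, its unique tracing point will be a periodic point, and I will arrange that it lands in $\nb U$.

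To set things up, I would first fix a proper expansive neighborhood $A$ for $f$ and choose a symmetric neighborhood $E$ of $\diag_X$ with $E^{2}\subset A$ and $E[x]\subset\nb U$, the existence of such an $E$ being of the same kind as the choices of symmetric neighborhoods made in the proof of Proposition~\ref{prop:unstabstabsets}. Since $E^{2}\subset A$ and $A$ is expansive, $E^{2}$ is again an expansive neighborhood (exactly as argued there), so Lemma~\ref{lem:uniqueshadowing} supplies a neighborhood $D$ of $\diag_X$ such that every $D$-pseudo-orbit is $E$-traced by exactly one orbit, periodic $D$-pseudo-orbits being traced by periodic orbits. Now I invoke chain recurrence: because $x\sim x$, for this particular $D$ there is a $D$-chain $\{x_{0}=x,x_{1},\dots,x_{n}=x\}$ from $x$ back to itself. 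I periodize it by setting $y_{i}=x_{(i\bmod n)}$ for all $i\in\Z$, obtaining a bi-infinite sequence with $y_{0}=x$. This is a periodic $D$-pseudo-orbit of period $n$: the interior conditions $(f(y_{i-1}),y_{i})\in D$ are inherited directly from the chain, and the only index needing care is the junction, where the required condition $(f(x_{n-1}),x_{n})=(f(x_{n-1}),x)\in D$ is precisely the last defining relation of the chain.

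Letting $p$ denote the unique $E$-tracing point of $\{y_{i}\}$, Lemma~\ref{lem:uniqueshadowing} guarantees $p$ is periodic, and reading the tracing condition at index $0$ gives $(p,x)=(F^{0}(p,x)$-coordinate$)$, more precisely $(p,x)\in E$; since $E$ is symmetric this yields $(x,p)\in E$, i.e.\ $p\in E[x]\subset\nb U$. Thus $p$ is a periodic point inside $\nb U$, and the proof is complete. I do not anticipate a genuine obstacle here; the essential bookkeeping is the simultaneous choice of a symmetric $E$ with both $E^{2}\subset A$ and $E[x]\subset\nb U$ (handled by shrinking and intersecting, as elsewhere in the paper) together with the junction check in the periodization. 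The one place where a slip could occur is conflating the coordinate order in $E$, which is exactly why taking $E$ symmetric from the outset is important: it lets me pass freely between $(p,x)\in E$ and $p\in E[x]$.
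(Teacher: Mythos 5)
Your proof is correct, and it shares the paper's core strategy -- choose a symmetric $E$ with $E[x]\subset \nb U$ and $E^{2}$ an expansive neighborhood, get $D$ from Lemma~\ref{lem:uniqueshadowing}, manufacture a periodic $D$-pseudo-orbit through $x$, and let the unique (hence periodic) $E$-tracing point be the desired periodic point in $E[x]\subset\nb U$. Where you genuinely diverge is in how the periodic pseudo-orbit is built. The paper first invokes Proposition~\ref{prop:nwchrec} to conclude $x\in\nw(f)$, takes $V=D\cap F^{-1}(D)$, finds $y\in V[x]$ with $f^{n}(y)\in V[x]$, and uses the true orbit segment $x, f(y),\dots,f^{n-1}(y)$ (repeated) as the periodic $D$-pseudo-orbit; this forces $D$ to be taken symmetric so that $(x,f^{n}(y))\in V$ yields $(f^{n}(y),x)\in D$ at the junction. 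You instead periodize a $D$-chain from $x$ to itself, which exists directly from the definition of $\rec(f)$; your junction check $(f(x_{n-1}),x_{n})=(f(x_{n-1}),x)\in D$ is exactly the last relation of the chain, so no symmetry of $D$ and no appeal to Proposition~\ref{prop:nwchrec} is needed. Your route is therefore a little more self-contained and economical; the paper's route has the mild advantage that the resulting pseudo-orbit is a genuine orbit segment except at one jump, but that plays no role in the conclusion. The only cosmetic slip is the garbled phrase about ``$(p,x)=(F^{0}(p,x)$-coordinate$)$''; the intended content, that the tracing condition at index $0$ gives $(p,x)\in E$ and symmetry of $E$ then gives $p\in E[x]$, is right and is indeed the one place where symmetry of $E$ is essential.
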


\begin{proof} Let $x\in \rec(f)$ and $\nb U$ be an open set containing $x$. Let $E$ be a neighborhood of $\diag_{X}$ such that $E[x]\subset\nb U$ and  $E^{2}$ is an expansive neighborhood for $f$. It suffices to show that there is a periodic point in $E[x]$.

By Lemma \ref{lem:uniqueshadowing}, we can find a symmetric neighborhood $D$ of $\diag_{X}$ such that every periodic $D$-pseudo-orbit is $E$-traced by a periodic point. Let $V=D\cap F^{-1}(D)$. By Proposition \ref{prop:nwchrec}, $\rec(f)=\nw(f)$, so $x$ is non-wandering. Thus, there is a $y\in V[x]$ and an $n\in \N$ such that $f^{n}(y)\in V[x]$. Then $(x,y),(x,f^{n}(y))\in V$ and hence $(f(x),f(y)),(f^{n}(y),x)\in D$. This enables us to define a periodic $D$-pseudo-orbit $\{\ldots,x_{0},x_{1},\ldots,x_{n-1},x_{0},x_{1},\ldots\}$ in which $x_{0}=x$, $x_{1}=f(y)$,\ldots, $x_{n-1}=f^{n-1}(y)$.  By Lemma \ref{lem:uniqueshadowing} there is a periodic point $p$ that $E$-traces this pseudo-orbit. So $p\in E[x]$.
\end{proof}

\begin{prop}\label{prop:chrec}
Let $f:X\to X$ be a homeomorphism and $R$ be a chain component of $f$. Then,
\begin{enumerate}
\item $\rec(f)$ is closed in $X$,
\item $R$ is closed in $X$, and
\item $R$ and $\rec(f)$ are $f$-invariant.
\end{enumerate}
\end{prop}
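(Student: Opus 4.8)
The plan is to exploit two complementary techniques. For the closedness statements (1) and (2) I will use \emph{endpoint replacement}: either end of a chain may be moved to a nearby point, at the cost of composing the tolerance neighborhood with itself a bounded number of times, where the ``nearby'' comparison at the moved end is transported across $f$ using continuity of $F=f\times f$. For the invariance statement (3) I will use \emph{pushing chains forward}: applying $F$ to every consecutive pair of a chain turns an $A'$-chain into an $F(A')$-chain, and by continuity $F(A')$ may be forced into any prescribed $A$. Throughout I use two facts already employed in the proof of Proposition~\ref{prop:unstabstabsets}: for any neighborhood $A$ of $\diag_X$ there is a symmetric neighborhood $B$ with $B^{2}\subset A$, and by continuity of $F$ there is a symmetric neighborhood $C\subset B$ with $F(C)\subset B$; moreover each cross section $C[x]$ is a neighborhood of $x$. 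Note also that since $\diag_X\subset B$ one has $B^{2}\subset B^{3}$, so the bookkeeping never worsens.

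For (1), let $x\in\cl(\rec(f))$ and fix $A$; choose $B$ and $C$ as above and pick $r\in\rec(f)\cap C[x]$. Let $\{r=r_0,r_1,\dots,r_m=r\}$ be a $B$-chain from $r$ to itself, which we may take with $m\ge2$ by traversing it twice. Replacing both endpoints by $x$ gives $\{x,r_1,\dots,r_{m-1},x\}$: the interior steps lie in $B\subset A$; for the first step $(f(x),f(r))\in F(C)\subset B$ and $(f(r),r_1)\in B$ give $(f(x),r_1)\in B^{2}\subset A$, and symmetrically $(f(r_{m-1}),x)\in B^{2}\subset A$. Thus there is an $A$-chain from $x$ to $x$, so $x\in\rec(f)$ and $\rec(f)$ is closed. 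Statement (2) is the same construction on a chain component: for $x\in\cl(R)$ pick $r\in R\cap C[x]$ and a $B$-chain from $r$ to $r$; replacing only the initial point by $x$ yields an $A$-chain from $x$ to $r$, and replacing only the terminal point by $x$ yields an $A$-chain from $r$ to $x$. As $A$ was arbitrary, $x\sim r$; since $r\in\rec(f)$, concatenation gives $x\sim x$, so $x\in\rec(f)$ and $x$ lies in the class $R$. Hence $R=\cl(R)$.

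For (3), first push chains forward: given $A$, choose $A'$ with $F(A')\subset A$; if $\{x_0,\dots,x_n\}$ is an $A'$-chain then $(f(f(x_{i-1})),f(x_i))=F(f(x_{i-1}),x_i)\in F(A')\subset A$, so $\{f(x_0),\dots,f(x_n)\}$ is an $A$-chain. Applied to chains witnessing $x\sim y$ (and to $f^{-1}$ for the reverse) this shows $x\sim y$ if and only if $f(x)\sim f(y)$; in particular $f(x)\in\rec(f)\iff x\in\rec(f)$, so $f(\rec(f))=\rec(f)$, and $f$ permutes the chain components. It remains to show $f(x)\sim x$ for $x\in\rec(f)$. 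One direction is the chain $\{x,f(x)\}$, valid since $(f(x),f(x))\in\diag_X\subset A$. For the other, pick a symmetric $B_0$ with $B_0^{2}\subset A$ and, by continuity, $A'\subset B_0$ with $F(A')\subset B_0$; take an $A'$-chain $\{x=x_0,x_1,\dots,x_n=x\}$ with $n\ge2$ and form $\{f(x),x_2,\dots,x_n=x\}$. Here $(f(x),x_1)\in A'$ gives $(f^{2}(x),f(x_1))\in F(A')\subset B_0$, and $(f(x_1),x_2)\in A'\subset B_0$, so the opening step satisfies $(f^{2}(x),x_2)\in B_0^{2}\subset A$, while the remaining steps lie in $A'\subset A$; this is an $A$-chain from $f(x)$ to $x$. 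Hence $f(x)\sim x$, so $f$ fixes each chain component setwise and $f(R)=R$.

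The main subtlety is the construction of the chain from $f(x)$ to $x$ in (3): one cannot simply prepend $f(x)$ to a recurrence chain, because the chain condition compares the \emph{image} of each vertex with its successor, so prepending $f(x)$ would require control of $(f^{2}(x),x_1)$ rather than the available $(f(x),x_1)$. The remedy is to delete the first interior vertex and instead verify that the new opening jump $(f^{2}(x),x_2)$ lands in the controlled composition arising from $F(A')$ followed by $A'$, which dictates the precise choice of $A'$ via continuity of $f$. By contrast, the endpoint-replacement arguments in (1) and (2) are routine once the composition estimates (the $B^{2}\subset A$ step) and the length-$\ge2$ normalization are in place.
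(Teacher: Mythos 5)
Your proofs of (1) and (3) are correct, and they rest on the same two devices as the paper's proof: endpoint replacement controlled by a composition estimate $B^{2}\subset A$, with the comparison at the moved endpoint transported across $f$ (the paper's choice $C\subset B\cap f^{-1}(B)$ is exactly your $F(C)\subset B$), and, for (3), precisely the paper's two chains $\{x,f(x)\}$ and the skip-one-vertex chain $\{f(x),x_{2},\dots,x_{n}=x\}$. The organization differs: the paper first shows that the relation $\tilde S=\{(x,y):x\sim y\}$ is closed in $X\times X$, using convergent sequences (hence the standing first-countability assumption), and then obtains (1) and (2) as $\rec(f)=\pi_{1}(\diag_{X}\cap\tilde S)$ and $R=\tilde S[x]$; your pointwise neighborhood argument avoids sequences entirely, so it does not need first countability, and your push-forward argument in (3) addresses full invariance $f(R)=R$ rather than only $f(R)\subset R$, which the paper leaves implicit.

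There is, however, a genuine quantifier slip in your (2). You fix $A$, choose $C$ depending on $A$, pick $r\in R\cap C[x]$, build $A$-chains from $x$ to $r$ and from $r$ to $x$, and conclude ``as $A$ was arbitrary, $x\sim r$.'' But $r$ was chosen \emph{after} $A$ (it must lie in $C[x]$, and $C$ depends on $A$), so what you have actually proved is: for every $A$ there exists $r_{A}\in R$ admitting $A$-chains $x\to r_{A}\to x$. Concatenation does give $x\in\rec(f)$, but the relation $x\sim r$ for a \emph{fixed} $r$ --- which is what membership in the equivalence class $R$ means --- does not follow, since $r_{A}$ may vary with $A$. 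The repair is one line: fix any $y\in R$; for each $A$, since $r_{A}\in R$ we have $r_{A}\sim y$, so there are $A$-chains $r_{A}\to y$ and $y\to r_{A}$; concatenating these with your chains $x\to r_{A}$ and $r_{A}\to x$ produces $A$-chains $x\to y$ and $y\to x$. As $A$ was arbitrary, $x\sim y$ and hence $x\in R$. With this correction the proposal is complete.
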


\begin{proof} 
First we will prove that the set \[S= \{(x, y)\in X\times X:\forall\text{ neighborhood }A\text{ of }\diag_{X},\ \exists\ A\text{-chain from }x\text{ to }y\}\] is a closed set in $X\times X$. Let $\{(x_{i},y_{i})\}_{i=0}^{\infty}\subset S$ be a sequence converging to $(x,y)\in X\times X$. Let $A$ be a neighborhood of $\diag_{X}$; we must prove that there is an $A$-chain from $x$ to $y$. 

Choose symmetric neighborhoods $W$ and $U$ of $\diag_{X}$ and $N>0$ such that $W^{3}\subset A$, $U\subset W\cap f^{-1}(W)$, and $(x_{N},x),(y_{N},y)\in U$. Since $(x_{N},y_{N})\in S$ there exists a $W$-chain $\{x_{N}=z_{0},z_{1},\ldots,z_{m}=y_{N}\}$. Suppose $m=1$. Then $(f(x_{N}),y_{N})\in W$. Moreover, because $(x,x_{N})\in U\subset f^{-1}(W)$, $(f(x),f(x_{N}))\in W$. Also, $(y_{N},y)\in U\subset W$. So $(f(x),y)\in W^{3}\subset A$, and hence $\{x,y\}$ is an $A$-chain. Likewise, if $m>1$, we can show that $(f(x),z_{1}),(f(z_{m-1}),y)\in W^{2}\subset A$, and therefore $\{x,z_{1},\ldots,z_{m-1},y\}$ is an $A$-chain. Hence $(x,y)\in S$.

Now, let $S^{T}=\{(x,y)\in X\times X: (y,x)\in S\}$ and $\tilde S=S\cap S^{T}$. Then $\tilde S = \{(x, y)\in X\times X:x \sim y\}$. Since $S$ and $S^{T}$ are closed sets, so is $\tilde S$.

(1) The diagonal $\diag_{X}$ is closed in $X\times X$. Thus, so is  $\rec(f)=\pi_{1}\left(\diag_{X}\cap \tilde S\right)$, where $\pi_{1}:X\times X\to X$ is the projection onto the first coordinate.

(2) Given any $x\in X$, $\tilde S[x]$ is closed, and $\tilde S[x]$ is the chain component of $f$ containing $x$.

(3) Let $x\in R$. We must show that $f(x)\in R$. Let $A$ be a neighborhood of $\diag_X$. Clearly $\{x,f(x)\}$ is an $A$-chain from $x$ to $f(x)$. Thus it remains to find an $A$-chain from $f(x)$ to $x$.  Choose neighborhoods $B$ and $C$ of $\diag_X$ such that $B^2\subset A$ and $C\subset B\cap f^{-1}(B)$.  Since $x$ is chain recurrent, there is a $C$-chain $\{x=x_0, x_1,\ldots,x=x_n\}$ from $x$ to itself.  Then $\{f(x), x_2, x_3,\ldots,x=x_n\}$ is the desired $A$-chain.
\end{proof}

\begin{prop}\label{prop:openclosed}
If $f$ is topologically Anosov and  $R$ is a chain component, then $R$ is both open and closed in $\rec(f)$.
\end{prop}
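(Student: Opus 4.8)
The plan is to dispose of closedness immediately and then spend the effort on openness, since the former is already essentially in hand. For closedness: Proposition~\ref{prop:chrec}(2) asserts that every chain component $R$ is closed in $X$, and since $R\subset\rec(f)$ it is a fortiori closed in the subspace $\rec(f)$. So no new work is needed there.

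For openness, the heart of the matter will be the following claim: there is a symmetric open neighborhood $D_0$ of $\diag_X$ such that any two periodic points $p,q$ with $(p,q)\in D_0$ satisfy $p\sim q$. Granting this, openness follows quickly, and I would extend the claim to all chain-recurrent points by a closedness-plus-density argument. Recall from the proof of Proposition~\ref{prop:chrec} that $\tilde S=\{(x,y):x\sim y\}$ is closed in $X\times X$. Given $x,y\in\rec(f)$ with $(x,y)\in D_0$, I would use the density of periodic points in $\rec(f)$ (Proposition~\ref{prop:denseper}), together with first countability, to choose periodic $p_i\to x$ and $q_i\to y$; since $D_0$ is open and $(p_i,q_i)\to(x,y)\in D_0$, we have $(p_i,q_i)\in D_0$ for large $i$, hence $p_i\sim q_i$ by the claim, and then $(x,y)\in\tilde S$ because $\tilde S$ is closed. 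Thus $(x,y)\in D_0$ with $x,y\in\rec(f)$ forces $x\sim y$. Taking $\nb U=D_0[x]$, an open neighborhood of $x$, we get $\nb U\cap\rec(f)\subset R$, so $R$ is open in $\rec(f)$.

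It remains to establish the claim, which is where the local product structure does its work. I would fix a closed, proper expansive neighborhood $A$ and take the neighborhoods $B$ (with $B^3\subset A$ and $B$ closed, so that $B$ is itself a proper expansive neighborhood) and $D$, together with the tracing map $t$, from Proposition~\ref{prop:unstabstabsets}; then shrink $D$ to a symmetric $D_0$. For periodic $p,q$ with $(p,q)\in D_0$, set $z=t(p,q)\in W^s_B(p)\cap W^u_B(q)$. Because $p$ and $q$ are periodic, Lemma~\ref{lem:locglob} upgrades this to $z\in W^s(p)\cap W^u(q)$, so the forward orbit of $z$ is asymptotic to the orbit of $p$ and its backward orbit is asymptotic to that of $q$. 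Given an arbitrary neighborhood $A'$ of $\diag_X$, choose symmetric $W$ with $F(W)\subset A'$ and $W\subset A'$, pick $N_1$ a multiple of the period of $p$ and $N_2$ a multiple of the period of $q$, both large enough that $(f^i(p),f^i(z))\in W$ for $i\ge N_1$ and $(f^{-i}(q),f^{-i}(z))\in W$ for $i\ge N_2$. Then I would thread an $A'$-chain from $q$ to $p$ by leaving $q$, following the genuine orbit segment $f^{-N_2}(z),\dots,f^{N_1}(z)$, and landing on $p$: the first step lies in $F(W)\subset A'$ and the last in $W\subset A'$ by the asymptotic estimates together with $f^{-N_2}(q)=q$ and $f^{N_1}(p)=p$, while the interior steps are exact orbit steps and hence lie on $\diag_X\subset A'$. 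Running the same construction with $p$ and $q$ interchanged, using $z'=t(q,p)$ (legitimate since $D_0$ is symmetric), yields an $A'$-chain from $p$ to $q$. As $A'$ was arbitrary, $p\sim q$.

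The main obstacle is exactly this last construction: arranging $B$ to be simultaneously a proper expansive neighborhood (so Lemma~\ref{lem:locglob} applies) and small enough for the local product structure, and then bookkeeping the indices and periods so that the connecting orbit of $z$ glues the orbit of $q$ to that of $p$ with every chain step landing in $A'$. Once the claim is proved, the reduction to it via closedness of $\tilde S$ and density of $\per(f)$ is formal.
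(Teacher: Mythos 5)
Your proof is correct and follows essentially the same route as the paper's: closedness from Proposition~\ref{prop:chrec}, and openness by connecting nearby periodic points with chains that run through the heteroclinic points $t(p,q)$ and $t(q,p)$ supplied by Proposition~\ref{prop:unstabstabsets} and upgraded to global stable/unstable sets by Lemma~\ref{lem:locglob}. The only (cosmetic) difference is the final assembly --- you pass from periodic pairs to arbitrary chain-recurrent pairs via the closedness of $\tilde S$ together with the density of $\per(f)$, whereas the paper splices the point $x$ directly into an $E$-chain through a periodic $q\in R$ --- and your off-by-one indexing at the chain endpoints is harmless and no worse than the paper's own bookkeeping.
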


\begin{proof}
Let $R$ be a chain component of $f$. It follows  from Proposition \ref{prop:chrec} that $R$ is closed in $\rec(f)$, so we must only show that $R$ is open in $\rec(f)$. Let $D$ be the neighborhood of $\diag_{X}$, which we may assume is open and symmetric, given by Proposition \ref{prop:unstabstabsets}. Because $D[R]$ is open, it suffices to show that a point $x\in D[R]\cap \rec(f)$ is in $R$. Let $E$ be a neighborhood of $\diag_{X}$; we will find an $E$-chain from $x$ to $x$ via a point of $R$.

Let $U$ be a neighborhood of $\diag_{X}$ such that $U^{2}\subset E\cap D$. By Proposition \ref{prop:denseper}, there is a periodic point  $p\in U[x]\subset D[x]$. Then there exists $q\in R$ such that $(p,q),(q,p)\in D$; by Proposition \ref{prop:denseper} we may assume $q$ is periodic. By Proposition \ref{prop:unstabstabsets}(2) and Lemma \ref{lem:locglob}, there exist $z_{1},z_{2}\in X$ such that $z_1 \in W^{u}(p)\cap W^{s}(q)$ and $z_2 \in W^{s}(p)\cap W^{u}(q)$. So, there exist $L,M,N,K\in\N$ such that $(f(p),f^{-L}(z_{1}))$, $(q,f^{M}(z_{1}))$, $(f(q),f^{-N}(z_{2}))$, $(p,f^{K}(z_{2}))\in U$. Then \[\{p, f^{-L}(z_{1}), \ldots, f^{M-1}(z_{1}),q, f^{-N}(z_{2}), \ldots, f^{K-1}(z_{2}),p\}.\] is a $U$-chain, and hence \[\{x, f^{-L}(z_{1}), \ldots, f^{M-1}(z_{1}),q, f^{-N}(z_{2}), \ldots, f^{K-1}(z_{2}),x\}.\]  is an $E$-chain.
\end{proof}

\begin{defn}
A map $f:X\to X$ is  \emph{topologically transitive} if for any pair of non-empty open sets $\nb G,\nb H\subset X$, there exists $n\in \N$ such that $f^{n}(\nb G)\cap \nb H\neq \emptyset$.
\end{defn}

We now show that a map that is expansive and has the shadowing property is topologically transitive on each chain component. This is a generalization of a result of Yang's (\cite[Lemma~3]{Yang}) in which he uses the metric definitions of expansivity, shadowing, and chain recurrence.

\begin{prop}\label{prop:chcomp}
If $f$ is topologically Anosov and  $R$ is a chain component, then $f|_{R} : R\to R$ is topologically transitive.
\end{prop}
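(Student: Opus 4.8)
The plan is to prove transitivity directly: given two non-empty relatively open sets $\nb G,\nb H\subseteq R$, I will produce an integer $n\ge 1$ and a point $z\in\nb G$ with $f^n(z)\in\nb H$. Write $\nb G=\nb G'\cap R$ and $\nb H=\nb H'\cap R$ with $\nb G',\nb H'$ open in $X$, and fix $x\in\nb G$ and $y\in\nb H$; both lie in $R$, so $x\sim y$. By Proposition \ref{prop:openclosed}, $R$ is open in $\rec(f)$, say $R=\nb O\cap\rec(f)$ with $\nb O$ open in $X$.

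First I would choose a symmetric neighborhood $E$ of $\diag_X$ with $E^2$ an expansive neighborhood and small enough that $E[x]\subseteq\nb G'\cap\nb O$ and $E[y]\subseteq\nb H'$; since shrinking $E$ only shrinks $E^2$ and a subset of an expansive neighborhood is still expansive, these finitely many conditions at the two points $x,y$ are compatible. Lemma \ref{lem:uniqueshadowing} then supplies a neighborhood $D$ of $\diag_X$ such that every periodic $D$-pseudo-orbit is $E$-traced by a periodic orbit. Because $x\sim y$, I would splice a $D$-chain from $x$ to $y$ together with a $D$-chain from $y$ back to $x$ and repeat the concatenation to obtain a periodic $D$-pseudo-orbit $\{\gamma_i\}$ with $\gamma_0=x$ and $\gamma_n=y$, where $n\ge 1$ is the length of the first chain. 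Let $z$ be the periodic point that $E$-traces it. Evaluating the tracing relation at times $0$ and $n$, and using that $E$ is symmetric, gives $z\in E[x]\subseteq\nb G'$ and $f^n(z)\in E[y]\subseteq\nb H'$.

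The main obstacle I anticipate is the final step, namely showing that $z$ actually lies in $R$ rather than in some neighboring chain component. A point produced by shadowing near $R$ need not, a priori, belong to $R$, and in the noncompact setting one cannot fall back on a compactness argument to exclude this. The device that resolves it is the deliberate use of a \emph{periodic} pseudo-orbit: by Lemma \ref{lem:uniqueshadowing} the tracing point $z$ is then periodic, so $z\in\rec(f)$, and since $z\in E[x]\subseteq\nb O$ we conclude $z\in\nb O\cap\rec(f)=R$. This is exactly where Proposition \ref{prop:openclosed} (openness of $R$ in $\rec(f)$) does the essential work.

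Finally, because $R$ is $f$-invariant (Proposition \ref{prop:chrec}), $f^n(z)\in R$ as well. Hence $z\in\nb G'\cap R=\nb G$ and $f^n(z)\in\nb H'\cap R=\nb H$, so $f^n(\nb G)\cap\nb H\neq\emptyset$, which proves that $f|_R$ is topologically transitive. I expect the routine parts to be the verification that the spliced sequence is genuinely a periodic $D$-pseudo-orbit and the bookkeeping identifying relatively open sets of $R$ with traces of open sets of $X$; the only conceptually delicate point is the periodicity-plus-openness argument placing $z$ in $R$.
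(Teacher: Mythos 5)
Your proposal is correct and follows essentially the same route as the paper's proof: concatenate $D$-chains from $x$ to $y$ and back into a periodic $D$-pseudo-orbit, invoke Lemma \ref{lem:uniqueshadowing} to get a \emph{periodic} tracing point (hence a point of $\rec(f)$), and use openness of $R$ in $\rec(f)$ together with invariance to place the tracing point and its $n$th iterate in $\nb G$ and $\nb H$. The paper encodes the openness step as the condition $E[x]\cap\rec(f)\subset R$ on the choice of $E$, which is exactly your appeal to Proposition \ref{prop:openclosed}.
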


\begin{proof}
Let $\nb G$ and $\nb H$ be nonempty open sets in $R$. Let $x\in \nb G$ and $y\in \nb H$. Let $E$ be a neighborhood of $\diag_{X}$ such that $x\in E[x]\cap R\subset \nb G$, $y\in E[y]\cap R\subset \nb H$,  $E[x]\cap \rec(f)\subset R$, and $E^{2}$ is an expansive neighborhood for $f$. By Lemma \ref{lem:uniqueshadowing}, there exists a neighborhood $D$ of $\diag_{X}$ (we may also assume $D\subset E$) such that every periodic $D$-pseudo-orbit is $E$-traced by a periodic orbit.  Since $x,y \in R$, there is a $D$-chain from $x$ to itself through $y$, $\{x_{0}=x,\ldots,x_n=y,\ldots,x_{m}=x\}$ in $X$ which we can extend to a periodic $D$-pseudo-orbit, $\{\ldots,x_{m-1},x_{m}=x_{0},\ldots,x_{m}=x_{0},x_{1},\ldots\}$.  By Lemma \ref{lem:uniqueshadowing} there is a periodic point $p\in X$ that $E$-traces this pseudo-orbit.  In fact, $p\in E[x]\cap \rec(f)\subset E[x]\cap R\subset \nb G$ and $f^{n}(p)\subset E[y]\cap R\subset \nb H$. So $f^{n}(p)\in f^{n}(\nb G)\cap \nb H\ne\emptyset$.
\end{proof}

Finally, we have all the ingredients to prove our main theorem.
\begin{proof}[Proof of Theorem \ref{thm:spectral}]
Proposition \ref{prop:nwchrec} tells us that $\nw(f)=\rec(f)$. We know that $\rec(f)$ has a natural decomposition into chain components,  $\{R_{\lambda}\}$, which, by Propositions \ref{prop:chrec}(3) and \ref{prop:openclosed}, are open and closed in $\nw(f)$ and are $f$-invariant. By Proposition \ref{prop:chcomp} $f$ is topologically transitive on each such component. Finally, notice that if $X$ is compact, then so is $\nw(f)$, and because  $\{R_{\lambda}\}$ is a open cover of $\nw(f)$ by disjoint sets, it must be finite.
\end{proof}

\section{Relationships to metric definitions}\label{sec:relation}

In this section with discuss the relationships between our topological definitions and the usual metric definitions for expansiveness, shadowing, and chain recurrence.  First, we give another generalization of chain recurrence, due to Hurley.

\begin{defn}[\cite{Hurley:1992}]
Let $(X,d)$ be a metric space and $f:X \to X$ be a homeomorphism.  Let $\mathcal P$ denote the set of continuous functions from $X$ to $(0,\infty)$.  For $\delta(x)\in\mathcal P$, a \emph{$\delta(x)$-chain} is a sequence $\{x_0,x_1,\dots,x_n\}$ ($n\ge1$) such that $d(f(x_{i-1}), x_i) < \delta(f(x_{i-1}))$ for $i=1,\ldots,n$.  A point $x$ is \emph{strongly chain-recurrent} if for any $\delta \in \mathcal P$, there is a $\delta(x)$-chain from $x$ to itself.
\end{defn}

In the case that we have a metric space, the various properties are related in the following way.

\begin{prop}\label{prop:metricrelations}
Let $(X,d)$ be a metric space and $f:X \to X$ be a homeomorphism.
\begin{enumerate}

\item Metric expansivity implies topological expansivity, but not vice versa.
	
\item Metric shadowing does not imply topological shadowing, nor does topological shadowing imply metric shadowing.
	
\item  Strong chain recurrence and topological chain recurrence are equivalent.  They imply metric chain recurrence, but not vice versa.

\end{enumerate}
If $X$ is compact, then metric and topological expansivity are equivalent, metric and topological shadowing are equivalent, and metric, topological, and strong chain recurrence are equivalent.
\end{prop}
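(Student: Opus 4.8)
The plan is to treat expansivity, shadowing, and chain recurrence one at a time, in each case proving the single stated implication by a direct construction and disproving the converse by an example that exploits noncompactness. The organizing observation is that the topological notions refer only to neighborhoods of $\diag_X$, so they are unchanged under passage to a compatible metric and are invariant under conjugacy (Proposition \ref{prop:nice}), while the metric notions have neither property. For the implication in (1), if $e$ is a metric expansive constant I would take $N=\{(x,y):d(x,y)\le e\}$, a closed neighborhood of $\diag_X$; metric expansivity says exactly that every pair of distinct orbits eventually leaves $N$, so $N$ is a topological expansive neighborhood. The converse fails via Example \ref{ex:expdepmet}: the linear map $T$ is metric expansive for the Euclidean metric, hence topologically expansive, and since topological expansivity does not see the metric it remains topologically expansive for the stereographic metric, for which it is not metric expansive.

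For (2) I would establish both non-implications. That metric shadowing does not force topological shadowing I would get from the map $f$ of Example \ref{ex:shdepmet}, which has metric shadowing and is conjugate to the translation--identity map $g$ on $\bigcup_n(\{n\}\times[0,1])$; I would show $g$ fails topological shadowing directly. Choose a neighborhood $E$ of the diagonal whose vertical width on the component indexed by $n$ shrinks to $0$ as $|n|\to\infty$; given any candidate $D$, build a $D$-pseudo-orbit whose second coordinate is constant equal to $L^-$ for $i\ll 0$ and constant equal to some $L^+\ne L^-$ for $i\gg 0$ (a bounded-variation drift through finitely many middle components). Since $g$ preserves the second coordinate, an $E$-tracing orbit would need its second coordinate within the shrinking tolerance of both $L^-$ and $L^+$, which is impossible; as topological shadowing is conjugacy invariant, $f$ fails it too. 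For the reverse non-implication I would transport a hyperbolic model---say the contraction $x\mapsto x/2$ on $\R$, which has topological shadowing---by a homeomorphism $\R\to(-1,1)$ to a conjugate map $\phi$ on the open interval with its Euclidean metric; topological shadowing persists, while the repelling boundary fixed point (where the displacement tends to $0$) admits $\delta$-pseudo-orbits hugging the endpoint that no genuine orbit, all of which flow inward, can $\varepsilon$-trace.

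Part (3) is the most substantive. The equivalence of strong and topological chain recurrence I would prove by translating between a continuous gauge $\delta\in\mathcal P$ and a neighborhood of $\diag_X$. Given $\delta$, the set $A=\{(u,w):d(u,w)<\delta(u)\}$ is open and contains $\diag_X$, and an $A$-chain is precisely a $\delta(x)$-chain---the gauge being evaluated at $u=f(x_{i-1})$, matching Hurley's definition---which gives topological $\Rightarrow$ strong. Conversely, given an open neighborhood $A\supset\diag_X$, set $\delta(u)=\tfrac12\,\mathrm{dist}_{X\times X}\big((u,u),\,(X\times X)\smallsetminus A\big)$, a positive continuous function, so that $d(u,w)<\delta(u)$ forces $(u,w)\in A$; hence every $\delta(x)$-chain is an $A$-chain and strong $\Rightarrow$ topological. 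Taking $\delta$ constant shows both imply metric chain recurrence. That metric chain recurrence is strictly weaker I would exhibit with a Hurley-type example \cite{Hurley:1992}: on $\R^2$ take $f(x,y)=\big(x+\tfrac{1}{1+x^2+y^2},\,y\big)$, whose displacement tends to $0$ at infinity. For constant $\delta$ every point is metric chain recurrent, since a chain may run out to infinity---where the map is nearly the identity and $\delta$-jumps move freely in all directions---loop around, and return; but the gauge $\delta(x,y)=\tfrac1C(1+x^2+y^2)^{-1}$, for $C$ large, forces the first coordinate to strictly increase at every step, so no $\delta(x)$-chain can close up and the strong (hence topological) chain recurrent set is empty.

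Finally, when $X$ is compact all distinctions collapse: by Remark \ref{rem:cpt} every neighborhood of $\diag_X$ contains some $U_\delta$ and every $U_\delta$ is a neighborhood, which yields metric $=$ topological shadowing and, with the implication from (1), metric $=$ topological expansivity (a closed neighborhood of the compact diagonal contains some $\{d\le e\}$ by compactness); and a continuous positive gauge attains a positive minimum, so a $\delta(x)$-chain is an ordinary $\delta_0$-chain, giving metric $\Rightarrow$ strong and, with (3), coincidence of all three. The main obstacle is part (3) together with the shadowing examples: producing a continuous gauge from a bare neighborhood of the diagonal (handled cleanly by the product-space distance function above) and, more delicately, confirming the \emph{topological} status of the model maps---topological shadowing being subtle on noncompact spaces---so that the width/drift construction for $g$, the boundary-fixed-point construction for $\phi$, and the escape-to-infinity construction for the chain-recurrence example must all be carried out carefully.
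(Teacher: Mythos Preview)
Your treatment of (1), the first non-implication in (2), the equivalence in (3), and the compact case is correct and essentially parallel to the paper.  Your construction of the continuous gauge in (3) via $\delta(u)=\tfrac12\,\mathrm{dist}_{X\times X}\big((u,u),(X\times X)\smallsetminus A\big)$ is in fact cleaner than the paper's: the paper instead sets $h(x)=d(x,X\smallsetminus U[x])$, argues that $h$ is lower semicontinuous, and then takes its continuous minorant $\delta(x)=\inf_y\{h(y)+d(x,y)\}$.  Your product-metric distance is automatically continuous and avoids the lsc step.

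The gap is in the second non-implication of (2).  Your plan hinges on the assertion that $x\mapsto x/2$ on $\R$ has \emph{topological} shadowing, which you flag as ``subtle'' but do not prove.  This is not routine: one must, given an arbitrary neighborhood $E$ of $\diag_\R$ whose cross-sections may shrink arbitrarily at infinity, manufacture a $D$ whose cross-sections shrink fast enough that the geometric tail sums $\sum_{j\ge0}2^{-j}d_{-n-j}$ governing the backward tracing error are dominated by the width of $E$ at $x_{-n}$, uniformly over all $D$-pseudo-orbits.  A proof along these lines can likely be pushed through, but it is real work and your proposal contains none of it.  The paper avoids this entirely with a much simpler example: take $X=\{x_n\}_{n\ge1}\subset\R$ with $x_n=\sum_{i=1}^n 1/i$ and $f=\mathrm{id}$.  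Since $X$ is discrete, one may choose $D$ with $D[x]=\{x\}$, so every $D$-pseudo-orbit is a genuine orbit and topological shadowing is \emph{trivial}; but the harmonic spacing makes the constant-then-drifting sequence $\{\dots,x_N,x_N,x_N,x_{N+1},x_{N+2},\dots\}$ a metric $\delta$-pseudo-orbit (for $N>1/\delta$) that no fixed point can $\varepsilon$-trace.  I would recommend adopting this example in place of the conjugated contraction, which turns a one-line verification into a substantial lemma.
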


\begin{proof}

(1)  If $e$ is an expansive constant, then $U_e=d^{-1}[0,e)$ is an expansive neighborhood; thus metric expansivity implies topological expansivity.  It is not difficult to see that the homeomorphism in Example \ref{ex:expdepmet} is topologically expansive. Thus, the opposite implication does not hold.

(2) The first half follows from Example \ref{ex:shdepmet}. The given homeomorphism has the shadowing property with respect to one metric, but it does not have the topological shadowing property (this is easiest to see with the second metric).

 Let $f$ be the identity map on the space $X=\{x_{n}\}_{n=1}^{\infty}$, where $x_{n}=\sum_{i=1}^{n}(1/i)$, given the metric inherited from $\R$. Since $X$ has the discrete topology, we can pick a neighborhood $D$ of $\diag_X$ such that $D[x]=\{x\}$ for all $x$.  Then a $D$-pseudo-orbit is an actual $f$-orbit (that is, a fixed point). So the topological shadowing property is trivially satisfied.  On the other hand, given $\delta>0$  and $N>1/\delta$, $\{\ldots,x_{N},x_{N},x_{N},x_{N+1},x_{N+2},\ldots\}$ is a $\delta$-pseudo-orbit, but clearly such a pseudo-orbit is not traced by any orbit. So $f$ does not have the metric shadowing property.

(3)  Topological chain recurrence clearly implies strong chain recurrence.  To prove the opposite implication, we must show that for any open neighborhood $U$ of $\diag_X$, there exists a $\delta \in \mathcal P$ such that $B_{\delta} \subset U$, where $B_{\delta} = \{(x,y) : d(x,y) < \delta(x)\}$.  We may assume that $U[x]\subsetneq X$ for all $x$.  Define a function $h:X\to (0,\infty)$ by $h(x)=d(x,X-U[x])$.  Since $h$ is lower semicontinuous, the function $\delta(x) = \inf\{f(y) + d(x,y): y\in X\}$ is continuous (see \cite[Theorem~67.2]{Ziemer}), and $0 < \delta(x) < h(x)$ for all $x$.  Thus $B_{\delta} \subset B_{h} \subset U$.

It is clear that topological and strong chain recurrence imply metric chain recurrence.  Hurley gives an example (\cite[Example~1]{Hurley:1992}) showing that the opposite implication does not hold.

Finally, the equivalences in the compact case are clear, as in Remark~\ref{rem:cpt}.

\end{proof}

Hurley has shown (\cite{Hurley:1992}) that strong (or topological) chain recurrence is the right notion to extend Conley's fundamental theorem of dynamical systems to noncompact metric spaces.  The second example from (2) is expansive but not metric expansive, and it has the shadowing property but not the metric shadowing property; thus Theorem~\ref{thm:spectral} applies, but a version using the usual metric definitions would not.  These examples, together with Proposition~\ref{prop:nice}, suggest that the topological definitions are a useful way to study the dynamical structure of a homeomorphism on a noncompact space, even if the space is compact and  the usual metric definitions could be applied.

\section{Acknowledgements}
This work was supported by BK 21 Maths vision 2013 Project of Korea. First author would like to thank Professor Keonhee Lee for his warm hospitality and useful discussions during his visit to Chungnam National University, Daejeon in Korea, spring semester 2007. The third and fourth authors would like to thank Professor Tarun Das for the invitation to M.\ S.\ University of Baroda for the 2010 ICM satellite conference on Various Aspects of Dynamical Systems and for his invitation to join this research project.

\nocite{*}
\bibliographystyle{amsplain} 
\bibliography{ChainRecurrence}

\end{document}